\numberwithin{equation}{section}
\numberwithin{figure}{section}
\newtheorem{remark}{Remark}[section]
\newcommand{\commentout}[1]{{}} 
\newcommand{\bfA}{{\bf A}}
\newcommand{\bfB}{{\bf B}}
\newcommand{\bfC}{{\bf C}}
\newcommand{\bff}{{\bf f}}
\newcommand{\bfG}{{\bf G}}
\newcommand{\bfI}{{\bf I}}
\newcommand{\bfJ}{{\bf J}}
\newcommand{\bfK}{{\bf K}}
\newcommand{\bfL}{{\bf L}}
\newcommand{\bfM}{{\bf M}}
\newcommand{\bfP}{{\bf P}}
\newcommand{\bfp}{{\bf p}}
\newcommand{\bfS}{{\bf S}}
\newcommand{\bfu}{{\bf u}}
\newcommand{\bfv}{{\bf v}}
\newcommand{\IV}{\mathcal I_{\mathcal V}}
\newcommand{\IQ}{\mathcal I_{\mathcal Q}}
\newcommand{\mV}{\mathcal V}
\newcommand{\mQ}{\mathcal Q}
\newcommand{\IVk}{\mathcal I_{\mathcal{V}_k}}
\newcommand{\IQk}{\mathcal I_{\mathcal{Q}_k}}
\newcommand{\IQkp}{\mathcal{I}_{\mathcal{Q}_{k+1}}}
\newcommand{\IVki}{\mathcal I^{-1}_{\mathcal{V}_k}}
\newcommand{\IQki}{\mathcal I^{-1}_{\mathcal{Q}_k}}
\newcommand{\IQkpi}{\mathcal I^{-1}_{\mathcal{Q}_{k+1}}}
\def\MT_leftarrow_fill:{%
  \arrowfill@\leftarrow\relbar\relbar}
\def\MT_rightarrow_fill:{%
  \arrowfill@\relbar\relbar\rightarrow}
\newcommand{\xrightleftarrows}[2][]{\mathrel{%
  \raise.55ex\hbox{%
    $\ext@arrow 0359\MT_rightarrow_fill:{\phantom{#1}}{#2}$}%
  \setbox0=\hbox{%
    $\ext@arrow 3095\MT_leftarrow_fill:{#1}{\phantom{#2}}$}%
  \kern-\wd0 \lower.55ex\box0}}
\newcommand{\grad}{\operatorname{grad}}
\newcommand{\curl}{\operatorname{curl}}
\renewcommand{\div}{\operatorname{div}}
\newcommand{\dd}{\,{\rm d}}
\newcommand{\inprd}[1]{\langle#1\rangle}
\newcommand{\vertiii}[1]{{\left\vert\kern-0.25ex\left\vert\kern-0.25ex\left\vert #1
    \right\vert\kern-0.25ex\right\vert\kern-0.25ex\right\vert}}
    \newcommand{\vertii}[1]{{\left\vert\kern-0.25ex\left\vert #1
    \right\vert\kern-0.25ex\right\vert}}
\begin{document}

\headers{Transformed primal-dual methods with variable-preconditioners}{L. Chen, R. Guo and J. Wei}
\title{
Transformed primal-dual methods with variable-preconditioners
\thanks{Submitted to the editors
DATE.
\funding{The work of the first and third authors was funded by NSF DMS-2012465. 
The work of the second author was supported in parts by NSF DMS-2309777 and PMA Funding Support (CUHK).}}}
\author{
Long Chen \thanks{Department of Mathematics, University of California, Irvine, CA 92697, USA. 
 (\email{chenlong@math.uci.edu}).}
\and Ruchi Guo \thanks{Department of Mathematics, The Chinese University of Hong Kong, Hong Kong (ruchiguo@cuhk.edu.hk)}
\and Jingrong Wei \thanks{Department of Mathematics, University of California, Irvine, CA 92697, USA. (\email{jingronw@uci.edu}).}
}

\date{}

\maketitle
\begin{abstract}
This paper introduces a novel Transformed Primal-Dual with variable-metric/preconditioner (TPDv) algorithm, designed to efficiently solve affine constrained optimization problems common in nonlinear partial differential equations (PDEs). Diverging from traditional methods, TPDv iteratively updates time-evolving preconditioning operators, enhancing adaptability. The algorithm is derived and analyzed, demonstrating global linear convergence rates under mild assumptions. Numerical experiments on challenging nonlinear PDEs, including the Darcy-Forchheimer model and a nonlinear electromagnetic problem, showcase the algorithm's superiority over existing methods in terms of iteration numbers and computational efficiency. The paper concludes with a comprehensive convergence analysis.
\end{abstract}

\begin{keywords}
Constrained optimization, nonlinear saddle point problems, primal-dual methods, Uzawa methods, preconditioners, multigrid methods, nonlinear electromagnetism, Darcy–Forchheimer model
\end{keywords}

\begin{MSCcodes}
37N30 $\cdot$ 47J25  $\cdot$ 65K05  $\cdot$ 65N12  $\cdot$ 90C30

\end{MSCcodes}

\tableofcontents

\section{Introduction}
Let $\mV$ and $\mQ$ be two Hilbert spaces with the inner products $\inprd{\cdot}_{\mV}$ and $\inprd{\cdot}_{\mQ}$, respectively.
Given a nonlinear function $f:\mV \rightarrow \mathbb{R}$ and a linear surjective operator $B:\mV\rightarrow \mQ$, we consider the following constrained optimization problem:
\begin{equation}
\label{eq:constraintopt} 
\min_{u\in \mV} ~ f(u), \quad \text{s.t.} ~ B u = b,
\end{equation}
where $b\in \mathcal{Q}$ is given. By introducing the  
 Lagrangian multiplier $p$, \eqref{eq:constraintopt} can be written as a saddle point problem:
\begin{equation}
\label{eq:minmax}
\min_{u\in\mV} \max_{p\in\mQ} ~ \mathcal{L}(u,p) =  f(u) + \langle B u - b, p \rangle_{\mQ}.
\end{equation}
Such a constrained optimization problem is of particular importance as it widely appears in a large class of science and engineering applications. 
A large family of nonlinear partial differential equations (PDEs) can be formulated into \eqref{eq:constraintopt} where $f$ represents the underlying physical energy and $B$ is usually from certain conservation law.
Another group of applications pertains to  inverse problems, where $f$ is a regularization or data-mismatching functional and $B$ represents the forward operator. 
In this paper, we focus on the application for solving nonlinear PDEs.

The Uzawa method~\cite{1958ArrowHurwiczUzawa} remains one of the earliest and most fundamental methods for solving \eqref{eq:minmax}. 
However, the classical Uzawa method requires solving a nonlinear optimization problem at each iteration, which can be very time-consuming, especially when the problem is ill-conditioned. Subsequent efforts have been devoted to developing inexact Uzawa methods (IUMs), where the sub-problem at each iteration merely needs to be solved to certain accuracy~\cite{1997BramblePasciakVassilev,chen1998global,cheng2003inexact,2002HuZou,2007HuZou}. 
Projected gradient descent (PGD) methods are also widely used to solve the constrained optimization problems, see~\cite{2004BaoDu,1981Dunn,2018GuptaJinNguyenMcCann,2020HenningPeterseim} and the references therein.
However, efficient computation of the projection is critical for PGD which also necessitates solving certain subproblems.
Then, preconditioners play a crucial role in accelerating solving these sub-problems, and they need to be time-dependent in order to incorporate the most recent nonlinear information.

To the best of our knowledge, several critical issues with existing IUMs remain unaddressed. On one hand, although global convergence can be achieved as demonstrated in~\cite{chen1998global}, no specific convergence rate has been obtained. On the other hand, existing techniques in the literature predominantly rely on spectrum analysis. A consequence is the attainment of local convergence, implying that the initial guess must be sufficiently close to the solution, as illustrated in~\cite{2007HuZou} for instance. In a more recent study,~\cite{song2019inexact} achieved a global linear convergence rate, assuming that the problem exhibits a specific structure, and an approximated nonlinear part can be solved exactly. We refer to Table \ref{table: existALG} for a survey of existing Uzawa-type methods for nonlinear saddle point problems.

In this work, rather than working on the conventional IUM, we shall consider the Transformed Primal-Dual (TPD) algorithm proposed in~\cite{chen2023transformed} where the authors have successfully shown global linear convergence rate.
However, a major limitation in~\cite{chen2023transformed} arises as the algorithm and analysis are confined to merely non-variant preconditioning operators, which then limits the full realization of the anticipated high efficiency.
In addition, the efficiency is not robustly supported by numerical examples, especially concerning numerous nonlinear PDEs.
In this paper, our main objective is to further improve the TPD flow to incorporate time-evolving preconditioning operators corresponding to varying metrics in the resulting algorithms. 

To emphasize the difference, we name the one in the present work Transformed Primal-Dual with variable-metric/preconditioner (TPDv) algorithm, as described in Algorithm \ref{alg:intro TPDv}. The parameters, $\alpha_k$ and $\gamma_k$ are determined by the convexity and Lipschitz constants. The operator $\tilde{S}_k$ is an effective approximation to $S_k:=B\IVk^{-1}B^{\top}$, which may not have an explicit form but the preconditioner $\IQk^{-1}$ is easily computable through the update in Line \ref{alg: TPDv IQ}. We point out that Line \ref{alg: TPDv u1}-\ref{alg: TPDv u2} covers the IUM by taking $\alpha_k=1$. Remarkably, the preconditioner of the dual variable $p$ is varying according to Line \ref{alg: TPDv IQ}, which can benefit both analysis and computation and, to our best knowledge, has never been considered in the literature.

\begin{algorithm}
\caption{Transformed primal-dual method with variable-metric/preconditioner}
\label{alg:intro TPDv}
\begin{algorithmic}[1]
\Require initial guess $(u_0, p_0)$ and ${\IQ}_{0}$ being symmetric and positive definite (SPD), a sequence of SPD operators $\{ \IVki , \tilde{S}_k\}_{k\geq 0}$ and positive scalars  $\{\alpha_k, \gamma_k\}_{k\geq 0}$

\For{$k = 1, 2, \cdots$}
\State $u_{k+1/2} = u_k - \IVki (\nabla f(u_k) + B^{\top} p_{k})$ \label{alg: TPDv u1}
\State ${\IQ}_{k+1} =\displaystyle \frac{1}{1+ \alpha_k\gamma_{k}}(\IQk + \alpha_k \gamma_{k}\tilde{S}_{k} )$  \label{alg: TPDv IQ}
\State $p_{k+1}  = p_k + \alpha_k \IQkpi (B u_{k+1/2} - b )$  \label{alg: TPDv p}
\State $ u_{k+1} = (1-\alpha_k)u_k + \alpha_k u_{k+1/2}$ \label{alg: TPDv u2}
\EndFor
\end{algorithmic}
\end{algorithm}

Although evolving preconditioners significantly enhance efficiency, they present substantial challenges in terms of analysis. The widely-used spectrum analysis for the linear saddle point problems~\cite{notay2019convergence} requires the approximation to be sufficiently close to the true solution, leading to the evolving preconditioners exhibiting nearly linear behavior. But the current work deviates from this assumption.

 By leveraging the dynamics at the continuous level and employing a meticulous Lyapunov analysis, we successfully establish the global linear convergence rate of the proposed TPDv algorithms. The application of dynamic flow to assist in the design of numerical schemes for constrained optimization problems is particularly intriguing, as evidenced by works such as \cite{luo2022primal, zhuang2019efficient}. The core elements of our success hinge on the delicate design of a Lyapunov function and a flow $\IQ' =  \gamma(\tilde{S} - \IQ)$ that guides the update strategy in Algorithm \ref{alg:intro TPDv} (Line \ref{alg: TPDv IQ}). This concept is inspired by and extends our recent work on Nesterov acceleration, as presented in \cite{luo2022differential} on the Nesterov acceleration.
To the best of our knowledge, the literature has not yet documented any global linear convergence rate achieved with variable preconditioners. This highlights the novelty and significance of our theoretical contributions.

It is well-known that preconditioners should be problem-dependent, and their design plays a crucial role in achieving the desired efficiency.
Our second contribution is to apply the proposed method to some challenging nonlinear PDEs with designing the efficient and effective preconditioners.
This includes the Darcy-Forchheimer model~\cite{ph1901wasserbewegung,ruth1992derivation} arising from flow in porous-media and the nonlinear eddy current problem in ferromagnetic materials~\cite{ida2013electromagnetics}. 
With extensive numerical results, we demonstrate that the proposed TPDv algorithm outperforms many existing ones in the literature in the sense of both iteration numbers and computational time.

This article consists of additional 5 sections. In the next section, we derive the TPDv flow and algorithms. Numerical results for solving the nonlinear PDEs are reported in Sections \ref{sec:darcy} and \ref{sec:maxwell}. In Section \ref{sec:converg}, we provide a comprehensive convergence analysis at both the continuous and discrete levels. Concluding remarks are addressed in Section \ref{sec:conclusions}.

\section{Transformed Primal-Dual Methods with Variable-Preconditioners}
\label{sec:TPDv}
In this section, we briefly motivate the transformed primal-dual with variable-preconditioners (TPDv) flow and derive the optimization algorithm. We treat $(u(t), p(t))$ as time-dependent functions and $(\cdot)'$ as the derivative with respect to $t$. 

\subsection{TPDv flow}
Let $\mathcal{A}: \mV \rightarrow \mV$ be the gradient operator of $f$; namely, $\mathcal{A}u := \nabla f(u)$, provided that $f$ is differentiable. Here with the Riesz representation theorem, $\mathcal A u$ is understood as an element in $\mV$. Note that $\mathcal{A}$ is nonlinear in general.
To motivate the TPDv flow, we temporarily assume $\mathcal{A}$ is a linear and symmetric positive definite (SPD) operator.
The following matrix decomposition is clear:
\begin{equation}
\label{eq: mat_decomp}
\left[\begin{array}{cc}
\mathcal{A} & B^{\top} \\
B & 0
\end{array}\right]
= 
\left[\begin{array}{cc}
\mathcal{I} & 0 \\
B\mathcal{A}^{-1} & \mathcal{I}
\end{array}\right]
\left[\begin{array}{cc}
\mathcal{A} & B^{\top} \\
0 & S_{\mathcal A}
\end{array}\right],
\end{equation}
where $S_{\mathcal A}= B\mathcal{A}^{-1}B^{\top}$ is the Schur complement of $\mathcal A$. As $B$ is surjective and $\mathcal A$ is SPD, $S_{\mathcal A}$ is SPD. With such factorization, the primary variable $u$ and the dual variable $p$ are decoupled. 
However, it is generally very expensive to compute $\mathcal{A}^{-1}$, making $S_{\mathcal A}^{-1}$ not computable. 

We introduce an operator $\IV^{-1}$ approximating $\mathcal{A}^{-1}$ and obtain the following decomposition
\begin{equation*}
\left[\begin{array}{cc}
\mathcal{A} & B^{\top} \\
B & 0
\end{array}\right]
= 
\left[\begin{array}{cc}
\mathcal{I} & 0 \\
-B\IV^{-1} & \mathcal{I}
\end{array}\right]
\left[\begin{array}{cc}
\mathcal{A} & B^{\top} \\
B(\mathcal{I} - \IV^{-1}\mathcal{A} ) & S
\end{array}\right],
\end{equation*}
where $S = B\IV^{-1} B^{\top}$. 
Note that even computing $S$ is challenging, and this issue will be addressed later by further introducing its approximation.
In addition, we further introduce $\IQ^{-1}$ as a preconditioner of $S$. In our recent work~\cite{chen2023transformed}, we introduced the following TPD flow
\begin{equation}
\label{eq:TPD_flow}
 \left[\begin{array}{c}
u' \\
p'
\end{array}\right]
= 
\left[\begin{array}{cc}
\IV^{-1} & 0 \\
0 & \IQ^{-1}
\end{array}\right]
\left(
\left[\begin{array}{cc}
-\mathcal{A} & -B^{\top} \\
B(\mathcal{I} - \IV^{-1}\mathcal{A} ) & S
\end{array}\right]
\left[\begin{array}{c}
u \\
p
\end{array}\right] -
\left[\begin{array}{c}
0 \\
b
\end{array}\right] \right),
\end{equation}
for which the saddle point $(u^{\star},p^{\star})$ of \eqref{eq:minmax} is clearly the static state of the TPD flow \eqref{eq:TPD_flow}.

We point it out that $\IV^{-1}$ and $\IQ^{-1}$ serve as certain preconditioners from the perspective of solving linear systems. We highlight that the preconditioners generally have to be updated to accommodate the latest nonlinear information. 
Namely, $\IV=\IV(t)$ and $\IQ=\IQ(t)$ are also functions of time $t$. 
In order to construct a suitable $\IQ$, we introduce its dynamics and propose the following TPD flow with variable-preconditioners and abbreviated as TPDv:
\begin{subequations}\label{eq:TPDv flow}
\begin{align}
u' &= \IV^{-1} ( -\nabla f(u)  - B^{\top}p ), \\
p' &= \IQ^{-1} ( Bu - b - B \IV^{-1}(\nabla f(u) - B^{\top} p) ),\\
\IQ' &= \gamma(\tilde{S} - \IQ), \label{eq: IQ_flow}
\end{align}
\end{subequations}
where $\tilde S^{-1}(t)$  is an easily computable SPD preconditioner of $S(t) = B\IV^{-1}(t) B^{\top} $.
As the nonlinear $\nabla f$ is considered in this work, $\IV$ and $\IQ$ are undoubtedly time dependent for achieving successful preconditioning, which, however, adds significant difficulties in the analysis. The flow \eqref{eq: IQ_flow} for $\IQ$ is introduced to guarantee a reduction of a tailored Lyapunov function, which serves as one key novelty in the present work.
By the ordinary differential equation theory, $\IQ(t)$ is SPD for all $t\geq 0$ given $\{\gamma(t)\}_{t \geq 0}$ is positive and $\IQ(0), \{\tilde S(t)\}_{t \geq 0}$ are SPD.

\subsection{TPDv algorithms}
We proceed to discuss the algorithms induced from the TPDv flow. 
We first introduce a sequence of linear SPD operators $\{\IVk, \tilde S_k\}_{k\geq 0}$ whose choices will be problem dependent.
Start from an initial guess $(u_0, p_0, \mathcal I_{\mQ_0})$ with $\mathcal I_{\mQ_0}$ being SPD. 
By applying a straightforward discretization to \eqref{eq:TPDv flow}, we obtain 
\begin{equation}\label{eq: EE scheme}
\left\{\begin{aligned}
u_{k+1} &= u_k - \alpha_{k} \IVki (\mathcal A(u_k) + B^{\top} p_{k}), \\
 \IQkp& = \IQk + \alpha_{k}\gamma_k(\tilde{S}_k -  \IQkp), \\
p_{k+1}  &= p_k + \alpha_{k} \IQkpi ( Bu_k - b - B \IVki ( \mathcal{A}(u_k) +  B^{\top} p_k ) ), 
\end{aligned}\right.
\end{equation}
where $\alpha_{k}$ denotes the time step and $\gamma_k$ is a parameter to control the changing rate of preconditioners. 
The positive parameter $\gamma_k$ guarantees $\IQkp$ remains SPD. 
Algorithm \ref{alg:intro TPDv} is an equivalent but computationally favorable form in which $\IVki$ is computed only once.
The update of $(u_{k+1/2}, p_{k+1})$ is an iteration of the IUM and $u_{k+1}$ is obtained by weighted average of $u_k$ and $u_{k+1/2}$. 
In each iteration of the TPDv algorithm, one only needs to solve the two systems induced by $\IVk$ and $ \IQk$.

One can also consider the implicit-explicit (IMEX) scheme, say explicit in $p$ and implicit in $u$ for example, given by
\begin{subequations}\label{eq:TPDv-imex}
\begin{align}
u_{k+1/2} &=  u_k -  \IVki ( \mathcal{A} (u_{k})  + B^{\top}p_k ), \\
\IQkp & = \frac{1}{1+ \alpha_{k}\gamma_k }( \IQk + \alpha_{k}\gamma_k\tilde S_k),\\
p_{k+1}  &= p_k + \alpha_{k} 
\IQkpi ( Bu_{k+1/2} - b), \\
\label{eq:TPDv-imex u} u_{k+1} &= u_k -  \alpha_k \IVki ( \mathcal{A} (u_{k+1})  + B^{\top}p_{k+1}).
\end{align}
\end{subequations}
Noted that \eqref{eq:TPDv-imex u} is a nonlinear function of $u_{k+1}$, and equivalent to 
\begin{equation}\label{eq:proximal}
\arg\min_{u} f(u) + \frac{1}{2\alpha_k}\| u - u_k + \alpha_k\IVk^{-1}B^{\top}p_{k+1}\|_{\IVk}^2.
\end{equation}
Solving this nonlinear equation is generally highly expensive. 
However, for some cases, we may obtain the closed-form solution of $u_{k+1}$ for a simple choice of $\IVk$, see Section \ref{sec:darcy}. When $\IVk= sI$, \eqref{eq:proximal} is known as a proximal operator. With the proximal operator, we may extend TPDv for non-smooth functions which will be explored somewhere else.

In this work, we will prove that the TPDv Algorithm \ref{alg:intro TPDv} is globally and linearly convergent, i.e., there exists $\rho \in (0,1)$ such that
$$
\| u_k - u^*\|_{\IVk}^2 + \| p_k - p^*\|_{\IQk}^2 \leq C_0(u_0, p_0) \rho^k 
$$
for $(u_k, p_k)$ generated by the algorithm with arbitrary initial value $(u_0, p_0)$.
We will explicitly demonstrate how $\rho$ depends on condition numbers measured by variable metrics.
Remarkably, to our best knowledge, this result has never been achieved for variable preconditioners. The most related work is~\cite{song2019inexact} where the nonlinear function is assumed to have a decomposition as $A+\mathcal G$, where $A$ is a linear SPD operator and $\mathcal G$ is maximally monotone. Preconditioners can be employed for the linear part, and an exact solver for the associated nonlinear problem $\IV + \mathcal G$ is required which is similar to the implicit step \eqref{eq:TPDv-imex u} and usually more expensive.

\begin{table}[htp]
	\centering
	\caption{Various Uzawa-type algorithms for nonlinear saddle point systems. $\IVk$ and $\IQk$ refer to the variable metrics used in the update of $u$ and $p$ respectively.} 
	\renewcommand{\arraystretch}{1.25}
	\begin{tabular}{@{} c c c c  @{}}
	\toprule
Preconditioners   & Ref. & Rate & Region
 \smallskip \\  
 \hline

nonlinear $\IVk$ and linear $\IQk$    & \makecell{ Chen~\cite{chen1998global} } & -- & \makecell{ global }

\smallskip \\

nonlinear $\IVk$ and nonlinear $\IQk$    & 
 \makecell{Hu and Zou~\cite{2007HuZou} } & linear & \makecell{ local } 

\smallskip \\

nonlinear $\IVk = \IV + \mathcal G$ and linear $\IQ$ (fixed)     & 
 \makecell{Song et al.~\cite{song2019inexact} } & linear & \makecell{ global }

\smallskip \\

linear $\IVk$ and linear $\IQk$    &  \makecell{This work} & linear & \makecell{ global } 

\smallskip \\
\bottomrule
	\end{tabular}
 \label{table: existALG}
\end{table}

\section{Main Theoretical Results}
\label{sec:main_results}

In this section, we will present the main theoretical results regarding the global optimal convergence, but postpone their lengthy proofs to Section \ref{sec:converg}.
To describe our results, we need to introduce the Bregman divergence of a differentiable function $f$ as
\begin{equation}
\label{breg_div}
D_{f}(u,v) : = f(u)-f(v)- \langle \nabla f(v), u-v \rangle.
\end{equation}
Given an SPD operator $M$, 
we say 
$f \in \mathcal S_{\mu_{f,M}, L_{f,M}}$ if there exists $0 < \mu_{f,M} < L_{f,M} <\infty$ such that 
\begin{equation*}
\frac{\mu_{f,M}}{2}\left\|u-v\right\|^{2}_{M}\leq D_{f}(u,v) \leq \frac{L_{f,M}}{2}\left\|u-v\right\|^{2}_{M}, \quad \forall  u, v \in \mathcal V.
\end{equation*}
We can define a condition number for the nonlinear function $f$ under the metric $M$: $\kappa_{f}(M) = L_{f,M}/\mu_{f,M},$
which is certainly invariant with respect to the scaling of $f$. Here to emphasize the dependence of the metric $M$, we treat $M$ as a variable in notation $\kappa_f(M)$.

For an SPD operator $S$, we denote the minimal (maximal) eigenvalue of $S$ as $\lambda_{\min}(S)$ ($\lambda_{\max}(S)$). 
When $D, A$ are SPD operators, $D^{-1}A$ is symmetric with respect to $(\cdot, \cdot)_A$-inner product. 
Therefore, the eigenvalues of $D^{-1}A$ are real.
Following the notation of $\mu$ and $L$ above, we define $\mu_{ A, D} := \lambda_{\min}(D^{-1}A)$ and  $L_{ A, D} := \lambda_{\max}(D^{-1} A)$. When $f$ is quadratic, i.e. $A = \nabla^2 f$ is a constant SPD operator, $L_{f,M} = \lambda_{\max}(M^{-1}A)$ and $\mu_{f,M} = \lambda_{\min}(M^{-1}A)$. 
In addition, $A \preccurlyeq D$ denotes the case that $D-A$ is positive semidefinite. So we will have $\mu_{A,D} D \preccurlyeq  A\preccurlyeq L_{A,D} D$.

Once again, the preconditioners $\IV^{-1}$ and $\IQ^{-1}$ are both time-variant, causing difficulties for analysis, especially for showing the global convergence.
One key technique in our analysis to circumvent the difficulties is to construct the following tailored Lyapunov function
\begin{equation}\label{eq: Lyapunov}
    \mathcal{E}(u,p, \IQ) =D_f(u, u^{\star}) + \frac{1}{2} \|p-p^{\star}\|^2_{\IQ}.
\end{equation}
In addition, we also define the discrete Lyapunov function 
\begin{equation}\label{eq: discrete Lyapunov}
  \mathcal E_k =   \mathcal{E}(u_k,p_k, \IQk) =D_f(u_k, u^{\star}) + \frac{1}{2} \|p_k-p^{\star}\|^2_{\IQk}.
\end{equation}
In the next two theorems, we will present the exponential stability of the equilibrium point of the TPDv flow \eqref{eq:TPDv flow} and the linear convergence of TPDv Algorithm \ref{alg:intro TPDv}.

\begin{theorem}
\label{thm:expo stability}
Given time-dependent SPD operator functions $\IV(t)$ and $\tilde S(t)$, assume $f \in \mathcal S_{\mu_{f,\IV(t)}, L_{f,\IV(t)}}$ with $\mu_{f,\IV(t)} > 1/2$ for all $t>0$. 
If $(u(t), p(t), \IQ(t))$ solves the TPDv flow~\eqref{eq:TPDv flow} with the initial value $(u(0), p(0), \mathcal I_{\mQ}(0))$ such that $\mathcal I_{\mQ}(0)$ is SPD, and $\gamma(t) = \beta(t)\mu_{S(t), \tilde S(t)},$ where
$S(t)=B\IV^{-1}(t)B^{\top}$ and 
$\beta(t) = (\mu_{f,\IV(t)} - 1/2)\mu^{-1}_{f,\IV(t)} L^{-1}_{f,\IV(t)} \in (0,1/2)$, then the following strong Lyapunov property holds
\begin{equation}
\label{eq_SLP}
\begin{aligned}
 \mathcal E'(u, p, \IQ) \leq - \tilde\mu \, \mathcal{E}(u,p, \IQ),
\end{aligned}
\end{equation}
where $\tilde{\mu}(t) = \beta(t) \min \left \{ \mu_{f,\IV(t)}, \mu_{S(t), \tilde S(t)} \right\}$. 
Therefore, 
\begin{equation}
\label{eq:exp_eq01}
\mathcal{E}(u(t),p(t), \IQ(t)) \leq \exp\left \{- \int_0^t \tilde \mu (s)  \dd s \right \} \mathcal{E}(u(0),p(0), \IQ(0)), \quad t > 0.
\end{equation}
\end{theorem}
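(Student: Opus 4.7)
The plan is to establish the strong Lyapunov property~\eqref{eq_SLP} pointwise in $t$; then the exponential bound~\eqref{eq:exp_eq01} is an immediate consequence of Gronwall's inequality applied to the scalar function $\mathcal{E}(u(t), p(t), \IQ(t))$. Writing $g := \nabla f(u) - \nabla f(u^{\star})$, $w := u - u^{\star}$, and $q := p - p^{\star}$, I would first differentiate $\mathcal{E}$ along the flow~\eqref{eq:TPDv flow}. Using $\nabla_{u} D_f(\cdot, u^{\star})(u) = g$, the product rule $\tfrac{d}{dt}\tfrac{1}{2}\langle q, \IQ q\rangle = \langle \IQ q, p'\rangle + \tfrac{1}{2}\langle q, \IQ' q\rangle$, the KKT conditions $\nabla f(u^{\star}) + B^{\top} p^{\star} = 0$ and $Bu^{\star} = b$, and the convenient identity $\IQ p' = Bw + Bu'$ implied by the dual equation, the cross terms collapse to
$$\mathcal{E}' \;=\; -\|g + B^{\top} q\|^{2}_{\IV^{-1}} \;+\; \langle Bw, q\rangle \;+\; \tfrac{\gamma}{2}\bigl(\|q\|^{2}_{\tilde{S}} - \|q\|^{2}_{\IQ}\bigr).$$

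The central step handles the indefinite cross term $\langle Bw, q\rangle$ together with the positive $\tfrac{\gamma}{2}\|q\|^{2}_{\tilde{S}}$. The choice $\gamma = \beta \mu_{S, \tilde{S}}$ yields $\gamma \tilde{S} \preccurlyeq \beta S$, hence $\tfrac{\gamma}{2}\|q\|^{2}_{\tilde{S}} \leq \tfrac{\beta}{2}\|q\|^{2}_{S}$. Expanding $\|g + B^{\top} q\|^{2}_{\IV^{-1}}$ exposes a $-\|q\|^{2}_{S}$ which, combined with the bound above, gives a net $-(1 - \beta/2)\|q\|^{2}_{S}$, and a $-2\langle q, B\IV^{-1}g\rangle$ which merges with $\langle Bw, q\rangle$ into $\langle B(w - 2\IV^{-1}g), q\rangle$. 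Completing the square in $q$ then produces
$$\mathcal{E}' \;\leq\; \tfrac{1}{4(1 - \beta/2)}\,\|B(w - 2\IV^{-1}g)\|^{2}_{S^{-1}} \;-\; \|g\|^{2}_{\IV^{-1}} \;-\; \tfrac{\gamma}{2}\|q\|^{2}_{\IQ}.$$
The pivotal ingredient here is the Schur-complement inequality $\|Bv\|_{S^{-1}} \leq \|v\|_{\IV}$, which holds because $\IV^{-1} B^{\top} S^{-1} B$ is an $\IV$-orthogonal projection; it transfers the first term back into the $\mV$-metric, where the convexity of $f$ becomes available.

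In the final stage I would expand $\|w - 2\IV^{-1}g\|^{2}_{\IV} = \|w\|^{2}_{\IV} - 4\langle w, g\rangle + 4\|g\|^{2}_{\IV^{-1}}$ and invoke the three-point identity $\langle w, g\rangle = D_f(u, u^{\star}) + D_f(u^{\star}, u)$ together with strong convexity $D_f(u^{\star}, u) \geq \tfrac{\mu_{f,\IV}}{2}\|w\|^{2}_{\IV}$ and smoothness $\|g\|^{2}_{\IV^{-1}} \leq 2 L_{f,\IV} D_f(u, u^{\star})$. The hypothesis $\mu_{f,\IV} > 1/2$ is precisely what forces the coefficient of $\|w\|^{2}_{\IV}$ to be negative after substitution, producing an estimate of the form $\mathcal{E}' \leq -a \|w\|^{2}_{\IV} - b\, D_f(u, u^{\star}) - \tfrac{\gamma}{2}\|q\|^{2}_{\IQ}$ with explicit positive $a, b$ depending only on $\mu_{f,\IV}, L_{f,\IV}, \beta$. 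A final use of $\|w\|^{2}_{\IV} \geq 2 D_f/L_{f,\IV}$ merges the $\|w\|^{2}_{\IV}$-contribution into the $D_f$-term; substituting the prescribed $\beta = (\mu_{f,\IV} - 1/2)(\mu_{f,\IV} L_{f,\IV})^{-1}$, a short algebraic check shows the effective $D_f$-coefficient dominates $\beta \mu_{f,\IV}$ while the $\tfrac{1}{2}\|q\|^{2}_{\IQ}$-coefficient equals $\gamma = \beta \mu_{S, \tilde{S}}$, both at least $\tilde{\mu} = \beta \min(\mu_{f,\IV}, \mu_{S, \tilde{S}})$. This gives $\mathcal{E}' \leq -\tilde{\mu}\,\mathcal{E}$, whence~\eqref{eq:exp_eq01} follows from Gronwall. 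The main obstacle is this second stage: a direct Young's inequality on $\langle Bw, q\rangle$ loses too much on either the $\|q\|^{2}_{S}$ side or the $\|w\|^{2}_{\IV}$ side, and the decisive move is to couple the completion-of-square in $q$ with the Schur-complement norm transfer and then use the very specific scalar $\beta$ in the hypothesis to make the residual gradient terms cancel favorably.
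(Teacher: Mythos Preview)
Your argument is correct and reaches the same conclusion, but the route is genuinely different from the paper's.  The paper does \emph{not} complete the square in $q$ or invoke the Schur-complement projection $\|Bv\|_{S^{-1}}\le \|v\|_{\IV}$.  Instead, it expands $-\mathcal{E}'$ into four terms, recognises the first cross term as part of a perfect square $\tfrac12\|g+B^{\top}q\|_{\IV^{-1}}^2\ge 0$, and isolates the second cross term as $\langle B^{\top}q,\, g-\IV w\rangle_{\IV^{-1}}$.  This is then bounded by Young's inequality with parameter $\theta$ together with a dedicated lemma (their Lemma~5.3/Corollary~5.4) showing that the map $e_M(\xi)=\xi-M\nabla f^*(\xi)$ is a contraction in $\|\cdot\|_{M^{-1}}$ precisely when $\mu_{f,M}>1/2$, which yields $\|g-\IV w\|_{\IV^{-1}}^2\le L_{e,\IV^{-1}}\|g\|_{\IV^{-1}}^2$ with $L_{e,\IV^{-1}}<1$.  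Choosing $\theta=\sqrt{L_{e,\IV^{-1}}}$ gives the coefficient $\beta$ directly, and the final passage to $D_f$ needs only $\|g\|_{\IV^{-1}}^2\ge 2\mu_{f,\IV}D_f$---no three-point identity or $\|w\|_{\IV}$ term ever appears.

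Your approach avoids the conjugate-function machinery entirely and works with $f$ alone, at the price of a slightly longer algebraic verification at the end (matching the prescribed $\beta$ to the inequality you need does go through, but it is more delicate than the paper's one-line finish).  The paper's approach has the advantage that the contraction lemma is modular: exactly the same inequality $\|g-\IV w\|_{\IV^{-1}}^2\le L_{e,\IV^{-1}}\|g\|_{\IV^{-1}}^2$ is reused verbatim in the discrete convergence proof (their Lemma~5.6), so the continuous and discrete analyses share a common core.  Your completion-of-square plus Schur-projection route is more elementary but less obviously recyclable for the discrete scheme.
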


The assumption $\mu_{f, \IV}>1/2$ can be achieved by rescaling the objective function $f$ or the preconditioner $\IV$. For example, suppose we have a preconditioner $\IV^*$ with $\mu_{f,\IV^*} < 1/2$. Define 
$\IV = \mu_{f,\IV^*}\IV^*,$ then we have $\mu_{f, \IV} = 1$. So in later analysis, we will assume $\mu_{f, \IV} = 1$.

\begin{theorem}\label{thm: linear convergence for TPD}
Given a sequence of SPD operators $\{\IVk\}_{k\geq 0}$ and $\{\tilde S_k \}_{k\geq 0}$, 
assume  $f(u) \in \mathcal S_{1, L_{f,\IVk}}$. Let $(u_k, p_k, \IQk)$ be obtained by the TPDv Algorithm \ref{alg:intro TPDv} with the initial value $(u_0, p_0, \mathcal I_{\mQ_0})$ such that $\mathcal I_{\mQ_0}$ is SPD, the parameters are
$$
\beta_k =  \frac{1}{2L_{f, \IVk}}, \quad \gamma_k =\beta_k \mu_{S_k,\tilde S_k}, \quad \mu_{S_k,\tilde S_k} = \lambda_{\min}(\tilde S_k^{-1} S_k),
$$ 
and step size
\begin{equation}\label{eq:alphabound}
    \begin{aligned}
                       0 < \alpha_k  & \leq \frac{\beta_k}{2(L_{f,\IVk} + L_{S_k, \IQkp})} \min 
        \left \{\frac{  \mu_{S_k,\tilde S_k}}{L_{S_k, \IQkp}} , 1 \right \}, \quad L_{S_k, \IQkp} = \lambda_{\max}(\IQkpi S_k).
    \end{aligned}
\end{equation}
Then for the Lyapunov function defined by~\eqref{eq: discrete Lyapunov}, it holds that
$$
\mathcal{E}_{k+1} \leq \rho(\alpha_k)\mathcal{E}_k, \quad    \text{ with }  0 < \rho(\alpha_k) < 1.
$$
In particular, when the step size 
$$\alpha_k  = \frac{\beta_k}{4(L_{f,\IVk} + L_{S_k, \IQkp})} \min 
        \left \{\frac{ \mu_{S_k,\tilde S_k} }{L_{S_k, \IQkp}} , 1\right \},$$
we have the linear convergence rate
$$
\mathcal{E}_{k+1} \leq \left (1-\frac{1}{2}\min\left \{\frac{\alpha_k \gamma_k}{1+\alpha_k \gamma_k}, \alpha_k\beta_k \right \}\right)\mathcal{E}_k.
$$    
\end{theorem}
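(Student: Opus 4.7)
The plan is to establish a one-step contraction by proving a discrete analog of the strong Lyapunov inequality \eqref{eq_SLP}, namely $\mathcal E_{k+1} - \mathcal E_k \leq -\rho'(\alpha_k)\mathcal E_k$, from which the claim follows with $\rho(\alpha_k) = 1 - \rho'(\alpha_k)$. The main obstacle, absent in the continuous setting, is that the dual metric $\IQk$ itself varies at each step, so the dual part of $\mathcal E_{k+1} - \mathcal E_k$ contains an extra, potentially indefinite, contribution $\frac{1}{2}\|p_k - p^{\star}\|^2_{\IQkp - \IQk}$. Controlling this term is precisely what the update Line \ref{alg: TPDv IQ} with $\gamma_k = \beta_k \mu_{S_k,\tilde S_k}$ is designed for, and tracking the cancellation it induces is the heart of the proof.

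For the primal part, the convexity of $u\mapsto D_f(u, u^{\star})$ together with the convex combination $u_{k+1} = (1-\alpha_k)u_k + \alpha_k u_{k+1/2}$ gives $D_f(u_{k+1}, u^{\star}) - D_f(u_k, u^{\star}) \leq \alpha_k [D_f(u_{k+1/2}, u^{\star}) - D_f(u_k, u^{\star})]$. Applying the three-point Bregman identity and substituting $u_{k+1/2} - u_k = -\IVki(\nabla f(u_k) + B^{\top} p_k)$ together with the KKT relation $\nabla f(u^{\star}) + B^{\top} p^{\star} = 0$ produces a dissipation $-\alpha_k\|\nabla f(u_k) - \nabla f(u^{\star})\|^2_{\IVki}$, a mixed cross term in $B^{\top}(p_k - p^{\star})$, and a positive remainder $\alpha_k D_f(u_{k+1/2}, u_k) \leq \alpha_k \frac{L_{f,\IVk}}{2}\|u_{k+1/2} - u_k\|^2_{\IVk}$.

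For the dual part, the standard expansion
$$\frac{1}{2}\|p_{k+1}-p^{\star}\|^2_{\IQkp} - \frac{1}{2}\|p_k-p^{\star}\|^2_{\IQk} = \frac{1}{2}\|p_{k+1}-p_k\|^2_{\IQkp} + \langle p_{k+1}-p_k, p_k-p^{\star}\rangle_{\IQkp} + \frac{1}{2}\|p_k-p^{\star}\|^2_{\IQkp - \IQk},$$
together with $\IQkp - \IQk = \alpha_k\gamma_k(\tilde S_k - \IQkp)$ from Line \ref{alg: TPDv IQ} and $\IQkp(p_{k+1}-p_k) = \alpha_k(Bu_{k+1/2}-b)$ from the dual update, reduces the analysis to the decomposition $Bu_{k+1/2} - b = B(u_k - u^{\star}) - B\IVki[(\nabla f(u_k) - \nabla f(u^{\star})) + B^{\top}(p_k - p^{\star})]$. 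The feasibility piece together with the KKT relation yields an additional dissipation of $D_f(u_k, u^{\star})$, the second appearance of the mixed cross term bundles with the primal one (to be bounded by Young's inequality), and the $B^{\top}(p_k - p^{\star})$ piece contributes a negative $-\alpha_k\|p_k - p^{\star}\|^2_{S_k}$.

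The critical cancellation is that $\frac{\alpha_k\gamma_k}{2}\|p_k-p^{\star}\|^2_{\tilde S_k - \IQkp}$ from the metric change combines with $-\alpha_k\|p_k - p^{\star}\|^2_{S_k}$: the inequality $\mu_{S_k,\tilde S_k}\tilde S_k \preccurlyeq S_k$ and the choice $\gamma_k = \beta_k \mu_{S_k,\tilde S_k}$ with $\beta_k \leq 1/2$ strictly dominate the indefinite piece, and the remaining $-\frac{\alpha_k\gamma_k}{2}\|p_k-p^{\star}\|^2_{\IQkp}$ converts, via $\IQk \preccurlyeq (1+\alpha_k\gamma_k)\IQkp$, into dissipation of the dual part of $\mathcal E_k$ at rate $\alpha_k\gamma_k/(1+\alpha_k\gamma_k)$. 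The remaining positive squared increments $\frac{L_{f,\IVk}}{2}\|u_{k+1/2}-u_k\|^2_{\IVk}$ and $\frac{\alpha_k^2}{2}\|Bu_{k+1/2}-b\|^2_{\IQkpi}$, plus the Young-inequality errors from the cross terms, are bounded using $L_{f,\IVk}$ and $L_{S_k,\IQkp}$ against the dissipation terms already identified; the step-size restriction \eqref{eq:alphabound} is calibrated so that these positive contributions absorb at most one half of the dissipation. With the explicit $\alpha_k$ in the theorem, collecting constants yields the stated rate $\rho(\alpha_k) = 1 - \frac{1}{2}\min\{\alpha_k\gamma_k/(1+\alpha_k\gamma_k), \alpha_k\beta_k\}$.
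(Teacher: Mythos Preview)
Your treatment of the dual part and the metric-change cancellation is on target and matches the paper. The primal part, however, contains a genuine gap that blocks the argument under the stated step-size bound.

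By first invoking convexity to pass from $u_{k+1}$ to $u_{k+1/2}$ you obtain the remainder
\[
\alpha_k\, D_f(u_{k+1/2},u_k)\ \leq\ \alpha_k\,\frac{L_{f,\IVk}}{2}\,\|u_{k+1/2}-u_k\|_{\IVk}^2,
\]
which is $O(\alpha_k)$, the \emph{same} order as every dissipation term you have available. To absorb it against, say, $-\alpha_k\|\nabla f(u_k)-\nabla f(u^\star)\|_{\IVki}^2$ (or any fixed fraction thereof) would force $L_{f,\IVk}$ to be bounded by an absolute constant independent of $\alpha_k$; since $\mu_{f,\IVk}=1$ implies $L_{f,\IVk}\geq 1$, the bookkeeping simply does not close for general $L_{f,\IVk}$, and no choice of $\alpha_k$ can fix this. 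The paper avoids this by bounding $D_f(u_{k+1},u^\star)-D_f(u_k,u^\star)$ \emph{directly} via Lipschitz smoothness at $u_k$, which yields the remainder $\frac{L_{f,\IVk}}{2}\|u_{k+1}-u_k\|_{\IVk}^2=\frac{\alpha_k^2 L_{f,\IVk}}{2}\|u_{k+1/2}-u_k\|_{\IVk}^2$; being $O(\alpha_k^2)$, this is then absorbed precisely by the step-size restriction \eqref{eq:alphabound}.

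There is a second, related gap in how you dispose of the cross terms and the ``feasibility piece'' $\alpha_k\langle B(u_k-u^\star),p_k-p^\star\rangle$. This term is indefinite and does not by itself produce dissipation of $D_f(u_k,u^\star)$. In the paper, the first-order contributions are packaged as a discrete strong Lyapunov inequality whose proof hinges on the contraction estimate
\[
\|\nabla f(u_k)-\nabla f(u^\star)-\IVk(u_k-u^\star)\|_{\IVki}^2\ \leq\ L_{e,\IVki}\,\|\nabla f(u_k)-\nabla f(u^\star)\|_{\IVki}^2,\qquad L_{e,\IVki}<1,
\]
valid because $\mu_{f,\IVk}=1>1/2$. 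This is what converts the combination of the feasibility piece with one cross term into something controllable by Young's inequality, and it is exactly where the factor $\beta_k=1/(2L_{f,\IVk})$ in the theorem originates. Your proposal does not invoke this estimate, and a plain Young bound on the doubled cross term $-2\alpha_k\langle \nabla f(u_k)-\nabla f(u^\star),B^\top(p_k-p^\star)\rangle_{\IVki}$ would consume all of the available squares, leaving no dissipation.
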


Several remarks on the choices of parameters and preconditioners are in order.
\begin{itemize}
\item The step size $\alpha_k$ cannot be too large since the explicit scheme usually requires sufficiently small-time step to ensure the stability. But too small step size $\alpha_k$ will slow down the convergence. The upper bound given in \eqref{eq:alphabound} is a theoretical and conservative bound. In practice, one can start from some large value, say $\alpha_k = 1.5$, and reduce it slightly if the scheme is not convergent. When a semi-implicit scheme is used, one can also enlarge $\alpha_k$. 

\smallskip
\item The parameter $\beta_k$ is essentially proportional to $\kappa^{-1}_f ({\IVk})$. So we should construct $\IVk$ so that $\kappa_{f}(\IVk)$ is small. Namely, $\IVk$ are preconditioners for the convex function $f$. 

\smallskip
\item 
With given $\IVk$, $S_k = B\IVk^{-1}B^{\top}$ is the corresponding Schur complement which is usually expensive to compute its inverse. $\tilde S_k$ should be a reliable approximation of $S_k$ so that, on one hand, $\kappa(\tilde S_k^{-1}S_k)$ is small, as a larger $\kappa(\tilde S_k^{-1}S_k)$ leads to smaller $\alpha_k$. On the other hand, the preconditioner $\IQkpi$ should be computationally feasible through the update using $\tilde S_k$.

\smallskip
\item The preconditioner $\IQkpi$ is ideally a preconditioner of $S_k$. But it is updated by Line 3 in Algorithm \ref{alg:intro TPDv}. The ratio $\tilde\kappa_{S_k}(\tilde S_k):= L_{S_k, \IQkp}/\mu_{S_k,\tilde S_k}$ can be still thought of as a condition number, which is scaling invariant, although the metric measuring the upper bound and lower bound are different. 

\smallskip
\item The parameter $\gamma_k$ will contribute to the reduction rate in a similar role to $\beta_k$.

\smallskip
\item With appropriate scaling, we can let $L_{S_k, \IQkp}\leq L_{f,\IVk}$. Then the step size is in the order of $\alpha_k = \mathcal O( \kappa_f^{-2}(\IVk) \tilde\kappa_{S_k}^{-1}(\tilde S_k))$ and the rate is $1- \mathcal O(\kappa_f^{-3}(\IVk) \tilde\kappa_{S_k}^{-1}(\tilde S_k)\min \{\mu_{S_k, \tilde S_k},1\})$. So choosing the preconditioner $\IVk$ is more important but preconditioning Schur complement is also necessary. 
In the examples below, we shall see that $\kappa_{S_k}^{-1}(\tilde S_k)$ can be guaranteed small by applying multigrid methods.

\end{itemize}

The most subtle part is the design of $\tilde S_k$. 
Notice that $\tilde S_k$ may not be the exact Schur complement $S_k = B\IVk^{-1}B^{\top}$, as the latter one is generally very expensive to compute.
In practice, $\tilde S_k$ should approximate $S_k$ through either approximating $\IVk^{-1}$ (the case in Section \ref{sec:maxwell}) or $\IQk^{-1}$ (the case in Section \ref{sec:darcy}). The latter case is even more involved, as the approximation is indirect.
To see this, let us denote $\IQk^*$ as the operator following the update with the exact Schur complement
\begin{equation}\label{eq:true_IQ}
    \IQkp^* = \omega_k \IQk^* +  (1-\omega_k) S_k, \quad \text{ with } \omega_k =  \frac{1}{1+ \alpha_{k}\gamma_k }. 
\end{equation}
Introducing $\IQk$ as an approximation of $\IQk^*$, 
and we directly define $\tilde S_k$ through the relation
\begin{equation}
 \IQkp = \omega_k  \IQk + (1-\omega_k) \tilde S_k.
\end{equation}
Note that $\tilde S_k$ does not enter the algorithm, and instead we use $S_k$ to update $\IQk^*$ and choose $\IQk$ to approximate $\IQk^*$. 
The following proposition shows that, when $\IQk$ is sufficiently close to $\IQk^*$, $\tilde S_k$ is a good approximation of $S_k$.

\begin{proposition}\label{prop:inexat_IQinv}
Suppose $\{ \IQk^*\}_{k\geq 0}$ and $\{ \IQk\}_{k\geq 0}$ follow the iteration 
$$
    \IQkp^* = \omega_k \IQk^* +  (1-\omega_k) S_k, \quad  \IQkp = \omega_k  \IQk + (1-\omega_k) \tilde S_k.
$$
With the weights $\omega_k\in (0,1)$.
   If there exist $\delta \in (0,1)$ so that for all $k\geq 0$ the spectral radius
    \begin{equation}\label{eq:spec_control}
\rho(  I - \IQk^{-1}\IQk^{*} ) \leq \frac{\delta}{1+\delta}, \quad  \text{ and }  \; 2\delta \frac{\omega_k}{1-\omega_k}   \IQk^*\preccurlyeq  \frac{1}{2} S_k,
    \end{equation}
    then
    $$(\frac{1}{2} -\delta)  S_k \preccurlyeq   \tilde S_k  \preccurlyeq (\frac{3}{2}+\delta)  S_k.$$
\end{proposition}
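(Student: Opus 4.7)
The plan is to eliminate $\tilde S_k$ and $S_k$ from the two recursions and reduce the target bound to an estimate on the operator error $E_k := \IQk - \IQk^*$ in the Loewner order. Subtracting the two updates gives
\[\tilde S_k - S_k \;=\; \frac{1}{1-\omega_k}\bigl(E_{k+1} - \omega_k E_k\bigr),\]
so the conclusion $(\tfrac12-\delta)S_k \preccurlyeq \tilde S_k \preccurlyeq (\tfrac32+\delta)S_k$ is equivalent to the two-sided bound $-X \preccurlyeq E_{k+1} - \omega_k E_k \preccurlyeq X$ with $X := (1-\omega_k)(\tfrac12+\delta)S_k$.

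First I would convert the spectral-radius hypothesis into a Loewner inequality. The operator $\IQk^{-1}E_k$ is not Euclidean-symmetric, but since both $\IQk$ and $E_k$ are symmetric, it is self-adjoint in the $\IQk$-inner product; hence its spectral radius coincides with its operator norm in that inner product. The assumption $\rho(I - \IQk^{-1}\IQk^*)\leq \tfrac{\delta}{1+\delta}$ then yields $-\tfrac{\delta}{1+\delta}\IQk \preccurlyeq E_k \preccurlyeq \tfrac{\delta}{1+\delta}\IQk$, which rearranges to $\IQk \preccurlyeq (1+\delta)\IQk^*$ and hence to the cleaner form $-\delta\IQk^* \preccurlyeq E_k \preccurlyeq \delta\IQk^*$ (and its analogue at index $k+1$).

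Second, the hypothesis $2\delta\tfrac{\omega_k}{1-\omega_k}\IQk^*\preccurlyeq\tfrac12 S_k$ immediately turns this into the bound $-\tfrac{1-\omega_k}{4}S_k \preccurlyeq \omega_k E_k \preccurlyeq \tfrac{1-\omega_k}{4}S_k$. The main step is to produce the analogous estimate on $E_{k+1}$ but against $S_k$ rather than the unavailable $S_{k+1}$. For this I would substitute the recursion $\IQkp^* = \omega_k\IQk^* + (1-\omega_k)S_k$ into the bound $-\delta\IQkp^*\preccurlyeq E_{k+1} \preccurlyeq \delta\IQkp^*$ and reuse the hypothesis on the $\IQk^*$ piece, giving
\[\delta\IQkp^* \;=\; \delta\omega_k\IQk^* + \delta(1-\omega_k)S_k \;\preccurlyeq\; (1-\omega_k)\bigl(\tfrac14+\delta\bigr)S_k.\]

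Adding the two Loewner estimates and dividing by $1-\omega_k$ yields the claim. The only nontrivial point is the first step — passing from the spectral-radius hypothesis to a Loewner inequality — which hinges on the $\IQk$-self-adjointness of $\IQk^{-1}E_k$; everything afterwards is a short chain of Loewner manipulations powered by the two hypotheses. A minor technicality is that the argument requires those hypotheses at index $k+1$ as well as $k$, which is available since the proposition postulates them for all $k\geq 0$.
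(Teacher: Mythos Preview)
Your proposal is correct and follows essentially the same approach as the paper's proof: convert the spectral-radius hypothesis into the Loewner sandwich $(1-\delta)\IQk^* \preccurlyeq \IQk \preccurlyeq (1+\delta)\IQk^*$, combine the two update rules, and absorb the residual $\IQk^*$ term using the second hypothesis. The only cosmetic difference is that you phrase everything via the error $E_k = \IQk - \IQk^*$ and supply an explicit justification (self-adjointness of $\IQk^{-1}E_k$ in the $\IQk$-inner product) for the spectral-radius-to-Loewner step, which the paper simply asserts.
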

\begin{proof}
    Noted that the first inequality in \eqref{eq:spec_control} implies
    $$ (1-\delta)   \IQk^* \preccurlyeq  \IQk \preccurlyeq (1+\delta)   \IQk^*.$$
By the update of $ \IQk$ and $\IQk^*$, we have 
    \begin{equation*}
        \omega_k (1-\delta)  \IQk^* +(1-\omega_k) \tilde S_k \preccurlyeq  \IQkp \preccurlyeq (1+\delta)  \IQkp^*.
    \end{equation*}
    Rearranging terms and using the update of $ \IQk^*$, we get
    \begin{equation*}
        (1-\omega_k)  \tilde S_k  \preccurlyeq  (1+\delta)  \IQkp^* -  \omega_k(1-\delta)  \IQk^* \preccurlyeq (1+\delta) (1-\omega_k) S_k + 2\delta  \omega_k \IQk^*.
    \end{equation*}
    Dividing by $(1-\omega_k)$, we have the upper bound of $\tilde S_k$. The lower bound follows by symmetry.  
\end{proof}

In Section \ref{sec:darcy}, we introduce $\IQk$ for approximating $\IQk^*$ through certain inner iteration, and the number of steps is sufficiently large to make $\delta$ in \eqref{eq:spec_control} small enough so that \eqref{eq:spec_control} is satisfied. Then the convergence analysis can be applied. 
Of course, this brings a new parameter of the inner iteration step number whose choice is also problem dependent.

\section{Application I: Darcy--Forchheimer Model}\label{sec:darcy}
In this section, we apply the TPDv algorithms to the nonlinear Darcy--Forchheimer model~\cite{ph1901wasserbewegung,ruth1992derivation}. 
We refer readers to \cite{2008BrennerScott} for the standard notation of the Sobolev spaces and the associated norms as well as the finite element spaces.

\subsection{Problem setup}

Given a creep flow on a domain $\Omega\subseteq \mathbb{R}^2$, let $u$ be the velocity and $p$ be the pressure.
When the velocity is relatively high, the usual linear Darcy’s law is not valid; instead, the Darcy-Forchheimer model ~\cite{ph1901wasserbewegung,ruth1992derivation} has been proposed to describe a nonlinear relation
\begin{equation}
\label{eq:DF1}
\frac{ \mu K^{-1} + \beta |u| }{\rho} u + \nabla p =f, ~~~~ \text{in} ~ \Omega,
\end{equation}
where $K$ is the permeability tensor assumed to be
uniformly positive definite and bounded; $\mu$ is the viscosity coefficient, $\rho$ is the density of the fluid, and $\beta$ is the dynamic viscosity. The parameter $\beta$, known as the Forchheimer number, controls the strength of nonlinearity. $|\cdot|$  denotes the Euclidean vector norm $|u|^2 = u \cdot u$. 
\eqref{eq:DF1} is then coupled with the conservation law
\begin{equation}
\label{eq:DF2}
\div u = g, ~~~~ \text{in} ~ \Omega,
\end{equation}
and the Neumann boundary condition
$    u\cdot n = g_N$ on $\partial\Omega,$ where $n$ is the unit exterior normal vector to the boundary of the given domain. According to the divergence theorem, $g$ and $g_N$ are given functions satisfying the compatibility condition $\int_{\Omega} g(x) \mathrm{d} x=\int_{\partial \Omega} g_N(s) \mathrm{d} s.$

Following~\cite{2008GiraultWheeler}, we will use the function spaces $\mathbb{V} = L^3(\Omega)^2$ and $\mathbb{Q} = W^{1, \frac{3}{2}}(\Omega) \cap L_0^2(\Omega)$ where the zero mean value condition $L_0^2(\Omega)=\left\{v \in L^2(\Omega): \int_{\Omega} v(x) \mathrm{d} x=0\right\}$
is added because $p$ is defined up to an additive constant. Given $f \in$ $L^3(\Omega)^2, g \in L^ {\frac{6}{5}}(\Omega)$, and $g_N \in L^{\frac{3}{2}}(\partial \Omega)$, we arrive at the nonlinear saddle-point system in a variational form: find $(u,p) \in \mathbb{V}\times \mathbb{Q}$ such that
\begin{subequations}\label{eq:DFweak}
\label{eq:DFweak 1}
\begin{align}
( \sigma(u)  u,  v )_{\Omega} +  (\nabla p, v )_{\Omega} &=  (f,v)_{\Omega}, ~~~~ \forall v \in \mathbb{V},  \label{DF31}\\
( u, \nabla q)_{\Omega} & = (\tilde g,p) , ~~~~~~ \forall q \in \mathbb{Q} , \label{eq:DFweak 2}
\end{align}
\end{subequations}
where $(\cdot,\cdot)_{\Omega}$ denotes the usual $L^2$-inner product over $\Omega$, 
$(\tilde g,p)  :=   - (g,p)_{\Omega} +  (g_N,p)_{\partial\Omega}$, 
and $\sigma(u)$ denotes the coefficient in \eqref{eq:DF1}, i.e.,
\begin{equation}
\label{sigmau}
\sigma(u) = ( \mu K^{-1} + \beta |u| I)/\rho .
\end{equation}
The well-posedness of variational problem \eqref{eq:DFweak} and an alternating-directions iteration has been developed in~\cite{2008GiraultWheeler}.

\subsection{Discretization and preconditioning}
Let $\mathcal{T}_h$ be a triangulation of $\Omega$.
Let $N_t$ and $N_n$ be the number of elements and vertices in $\mathcal{T}_h$. 
Denoted by $V_h^1$ the linear Lagrange finite element space and $V_h^0$ the piecewise constant function space. Following~\cite{salas2013analysis}, we let $\mathcal V = [V^0_h]^2$ and $\mathcal Q = V^1_h \cap L_0^2(\Omega)$  to discretize the velocity $u$ and pressure $p$. 
The discrete versions of the operators in the bilinear form \eqref{eq:DFweak} are then given as
\begin{subequations}
\label{discrete_darcy1}
\begin{align}
& \mathcal{A}: \mV \longrightarrow \mV, ~ \text{with} ~~     (\mathcal{A}(u_h),v_h) :=  (\sigma(u_h)  u_h,  v_h)_{\Omega} , ~ \forall v_h\in \mV, \\
& B : \mV \longrightarrow \mQ, ~ \text{with} ~~   ( B u_h,q_h ): =  (u_h, \nabla q_h)_{\Omega},  ~ \forall q_h\in \mQ.
\end{align}
\end{subequations}

We proceed to describe our preconditioning strategies. Once chosen a basis of $\mV$ and $\mQ$, the function can be identified as a vector, i.e., $\mV \cong \mathbb R^{2N_t}$ and $\mQ\cong \mathbb R^{N_n}$. 
The zero average condition on the pressure $p$ is fulfilled by subtracting the integral of $p$ at the last iteration. 
Denote $\sigma_k = \sigma(u_k)$ for each iteration $u_k$. 
Then, $( \sigma_k \cdot, \cdot)_{\Omega}$ defines a weighted $L^2$-inner product.

To facilitate the presentation, 
we write the matrix form of the corresponding operators and the vector form of functions in boldface letters. 
Below are some matrices which will be frequently used.
\begin{itemize}

\item $\bfM_0^{\sigma_k}$ denotes the diagonal mass matrix on space $\mV$ under the inner product $( \sigma_k \cdot, \cdot)_{\Omega}$.
For simplicity, $\bfM_0$ is the usual mass matrix with $\sigma_k=1$.

\smallskip

\item $\bfB^{\top} = \bfM_0\bfG$, where $\bfG$ is the matrix representation of the $\grad$ operator. 

\smallskip

\item $\bfK^{-1} = \operatorname{diag}\left \{ \left[ |T_i|^{-1} \int_{T_i} K^{-1} \dd x \right]_{i=1}^{N_t} \right \}\in \mathbb{R}^{2N_t \times 2N_t}$ denotes a block diagonal matrix.

\end{itemize}

We first consider the TPDv Algorithm \ref{alg:intro TPDv}. In this case, we set $\bfI_{\mV_k} = \bfM_0^{\sigma_k}$. Then $\bfS_k =  \bfB (\bfM_0^{\sigma_k})^{-1}\bfB^{\top}$
is the negative Laplacian with variable coefficient. Choose $\bfI_{\mQ_{0}}^* = \bfS_0$. The ideal preconditioner should be an interpolation of two Laplacian with variable coefficients, i.e., for $k\geq 0$,
$$
\bfI_{\mQ_{k+1}}^*=\frac{1}{1+ \alpha_k\gamma_{k}}(\bfI_{\mQ_k}^* + \alpha_k \gamma_{k}\bfS_{k} ),
$$
which can be efficiently computed. 
However, it is relatively expensive to exactly compute $(\bfI_{\mQ_k}^*)^{-1}$.  
We set $\bfI_{\mQ_k}^{-1}$ as $m$ V-cycles of a multigrid (MG ($\bfI_{\mQ_k}^*, m$)) for computing $(\bfI_{\mQ_k}^*)^{-1}$. By Proposition \ref{prop:inexat_IQinv}, for sufficient many V-cycles, $\tilde \bfS_k$ is spectral equivalent to $\bfS_k$. 

Next, we consider the TPDv-IMEX algorithm \eqref{eq:TPDv-imex}. We use the same preconditioners, i.e., 
$$
\bfI_{\mV_k}  = \bfM_0^{\sigma_k}, \quad \bfI_{\mQ_k}^{-1} = {\rm MG}(\bfI_{\mQ_k}^*, m).
$$ but use the implicit-explicit scheme to update $u_{k+1}$ by: 
$$\bfu_{k+1} = \bfu_k +  \alpha_k \bfI_{\mV_k}^{-1} ( -\tilde{\mathcal{A}} (\bfu_{k+1})  - \bfB^{\top}\bfp_{k+1})$$
with $\tilde{\mathcal{A}} (\bfu_{k+1}): = \rho^{-1}(\mu \bfM_0 \bfK^{-1}\bfu_k + \beta \bfM_0|\bfu_{k+1}|\bfu_{k+1})$. 
When $K$ is scalar, $\bfu_{k+1}$ admits a closed-form solution since $\bfM_0^{\sigma_k}$ is scalar on each component:
$$
\bfu^i_{k+1}=\frac{1}{\eta^i_{k+1}}\bfv^i_{k+1}, \quad \forall T_i \in \mathcal T_h, ~~~ i=1,2,...,N_t,
$$
where ${\bf v}^i$ denotes the component of ${\bf v}$ on $T_i$ and
$$
\begin{aligned}
\bfv^i_{k+1} & =\frac{\sigma_k^i}{\alpha_k} \bfu^i_k-\frac{\mu}{\rho} K_{T_i}^{-1}  \bfu^i_k-(\nabla \bfp_k)^i+\bff^i, \quad \eta^i_{k+1}  =\frac{\sigma_k^i}{2 \alpha_k}+\frac{1}{2} \sqrt{\left(\frac{\sigma_k^i}{\alpha_k}\right)^2+4 \frac{\beta}{\rho}\left|\bfv^i_{k+1}\right|} .
\end{aligned}
$$
When $K$ is a general tensor, we can use piecewise scaled identities as the preconditioner in the TPDV-IMEX algorithm so that a similar close form can be used.

It is worthwhile to mention that the computation scheme~\cite{2008GiraultWheeler} can be regarded as $\bfI_{\mV_k}  = \bfM_0$. In numerical experiments, it is more efficient to incorporate the current information in the preconditioner as done in the present case.

\subsection{Numerical results}
We borrow the test problem from~\cite{lopez2009comparison}. We choose $\mu=1, \rho=1, \beta = 30, K=I$, and $\Omega \subset \mathbb{R}^2$ as the square $(-1,1)^2$. 

All of our experiments are implemented based on the software package $i$FEM~\cite{chen2009integrated}. The algorithms terminate until the infinite norm of residual decreases by a tolerance factor $10^{-6}$. For the TPDv algorithm, we set the fixed $\gamma = 1.4$ and step size $\alpha = 0.7$. For the TPDv-IMEX algorithm, we set the fixed  $\gamma = 0.9$ and step size $\alpha = 1.5$ as the implicit scheme leads to a larger step size.

\begin{figure}[htp]
  \begin{minipage}[t]{0.55\linewidth}

	\centering
	\caption{Iteration number and CPU time.}
	\renewcommand{\arraystretch}{1.25}
\resizebox{7.5cm}{!}{	\begin{tabular}{@{} c c  c c c @{}}
	\toprule
        &   $h$       &   DoFs    &   TPDv &    TPDv-IMEX
 \smallskip \\  
 \hline 
 \multirow{4}{*}{\makecell{ Iteration \\ (Vcycle)}}     &  1/64    &  49409   &   49   & 27 \\
                              &  1/128   &    197121    & 49    & 27   \\  
                               &  1/256     &    787457      &  49  &   27 \\ 
                               & 1/512     &   3147777    &   48    &  27 \\ \hline
\multirow{4}{*}{\makecell{CPU time \\(seconds)} }   &  1/64    &  49409   &   1.4   &  0.87\\
                           &  1/128   &    197121   &    6.1   &   3.7  \\ 
                             &  1/256     &    787457   &   27     &    16  \\  
                            & 1/512     &   3147777     &  122   &    73   \\   \bottomrule
\end{tabular} }
\label{table:DFmodel results}
  \end{minipage}
  \begin{minipage}[t]{0.44\linewidth}
    \centering
     \caption{Rate of CPU time growth}
    \includegraphics[scale=0.35]{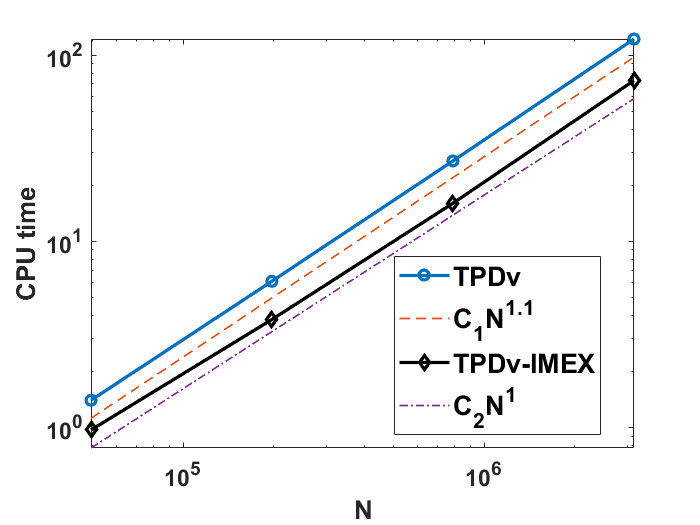}
    \label{fig:Rate of CPU time growth}
\end{minipage}
	\caption{Computation results for Darcy–Forchheimer model. For the TPDv Algorithm \ref{alg:intro TPDv}, $\gamma = 1.4$ and step size $\alpha = 0.7$. For the TPDv-IMEX algorithm \eqref{eq:TPDv-imex}, $\gamma = 0.9$ and step size $\alpha = 1.5$.}
 \label{fig:DFmodel results}
\end{figure}

The computation of $\bfI_{\mV_k}^{-1}$ is neglectable as the mass matrix of piecewise constant vectors is diagonal. The cost will be dominated by $\bfI_{\mQ_k}^{-1}$, the computation of V-cycles for Laplacian with variable coefficients. We will use one V-cycle, i.e. ${\rm MG}(\bfI_{\mQ_k}^*, 1)$. The iteration step, CPU time, and computation cost measured by V-cycle units are listed in Figure \ref{table:DFmodel results}. We can see that the iteration number is independent of grid size $h$. Due to the implicit solve for the nonlinear part, TPDv-IMEX save 1/2 computation cost. We plot the rate of CPU time growth in Figure \ref{fig:Rate of CPU time growth}. We can conclude the time growth for TPDv algorithms is almost linear, which is a near optimal complexity solver. 

In comparison to the nonlinear multigrid method used in~\cite{huang2018multigrid}, our algorithm is much simpler. By counting the number of V-cycles, our solver is significantly more efficient. In~\cite{huang2018multigrid}, when prolongate the correction from the coarse grid to the fine grid, it requires a projection to $\ker(B)$ which is equivalent to more or less $10$ V-cycles for the standard Laplacian. 

Given that nonlinear multigrid methods have demonstrated superiority over other iterative methods in~\cite{huang2018multigrid}, such as the nonlinear iteration in~\cite{2008GiraultWheeler}, we anticipate that TPDv algorithms are very efficient solvers for discretization of Darcy--Forchheimer model.

\section{Application II: A Nonlinear Electromagnetic Model}
\label{sec:maxwell}
In this subsection, we consider the nonlinear eddy current problem \cite{ida2013electromagnetics,2020XuYouseptZou,2013Yousept}. 
In the context of magnetism, such a nonlinearity exists for ferromagnetic materials.
We also refer readers to \cite{2008BrennerScott,2016Chen} for the standard notation of the Sobolev spaces and the associated norms as well as the involved finite element spaces in this section.

\subsection{Problem setup}
The nonlinear magnetostatic problem on a domain $\Omega\subseteq\mathbb{R}^3$ is: 
\begin{equation}
\label{maxwell1}
\curl \mathcal{H} = \mathcal{J}, ~~~~~~~\mathcal{H} = \nu(x,|\mathcal{B}(x)|) \mathcal{B}, ~~~~~  \div\mathcal{B} = 0, ~~~~  \text{in} ~ \Omega, 
\end{equation}
subject to suitable boundary boundary conditions, say the perfect-electric/Magnetic-conductor boundary conditions for example,
where the three-dimensional vector fields $\mathcal{H}$, $\mathcal{B}$, and $\mathcal{J}$ denote the magnetic field, the magnetic induction, and the current density, respectively.
Here, $\nu(\cdot)$ is a nonlinear function describing the magnetic reductivity which is also the inverse of the magnetic permeability.

In view of Gauss’ law for magnetism, there is a vector $u\in H_0(\curl;\Omega)\cap H(\div;\Omega)$ called magnetic potential satisfying $\mathcal{B} = \curl u$ and $\div u = 0$ in $\Omega.$
Following the standard strategy, we also introduce a Lagrange multiplier $p$ to impose the constraint $\div u=0$ in a weak sense. 
Then we arrive at the nonlinear curl-curl problem \cite{bachinger2005numerical,xu2020adaptive}: find $u\in H_0(\curl;\Omega), p\in H^1_0(\Omega)$ such that
\begin{subequations}
\label{maxwell3}
\begin{align}
 ( \nu(x,|\curl u(x)|) \curl u, \curl v )_{\Omega} + (\nabla p, v)_{\Omega}&= (\mathcal{J}, v)_{\Omega}, ~~~~ \forall v \in H_0(\curl;\Omega),  \label{maxwell31}\\
 (u, \nabla q)_{\Omega} & = 0, ~~~~~~~~~~~~\forall q \in H^1_0(\Omega). \label{maxwell32}
\end{align}
\end{subequations}
For the existence and uniqueness of the solution to the resulting system \eqref{maxwell3}, 
we need some monotonicity assumption on $\nu$ and refer readers to Lemma 3.1 in \cite{2013Yousept} for details.

\subsection{Discretization and preconditioning}
Let $\mathcal{T}_h$ be a tetrahedral mesh of $\Omega$.
For discretizing the Maxwell system above, we let  $\mathcal{V} = V^e_h(\Omega)\subset H_0(\curl; \Omega)$ and $\mathcal{Q}= V^n_h(\Omega)\subset H_0^1(\Omega)$ be the lowest order Nedelec edge element and the linear Lagrange element \cite{2008BrennerScott,2016Chen} with their associated zero boundary conditions, respectively. The dimensions of $V^n_h(\Omega)$ and $V^e_h(\Omega)$ are denoted as $N_n$ and $N_e$.
Then, the discretized operators are given as 
\begin{subequations}
\label{discrete_maxwell1}
\begin{align}
 B & := - \div_h : \mV \longrightarrow \mQ,   \text{ with }  (  B u_h,q_h ) : = (u_h, \nabla q_h)_{\Omega},  ~ \forall q_h\in \mQ, \nonumber \\
B^{\top} & := \grad : \mQ \longrightarrow \mV,  \text{ with }  (  B_h^{\top}p_h, v_h ): = (\nabla p_h, v_h)_{\Omega},  ~ \forall v_h\in \mV.
\end{align}
\end{subequations}

As $(\curl\cdot,\curl\cdot)$ is not coercive in $H_0(\curl;\Omega)$, we shall consider the augmented Lagrange method (ALM) by adding a div-div term as shown below. 
In addition, we define
\begin{align}
\label{Anu2}
 \mathcal{A}^{\nu}: \mV \rightarrow \mV, ~ \text{with} ~ (\mathcal{A}^{\nu} u_h, v_h) = (\nu \curl u_h, \curl v_h) + (\nu \div_h u_h, \div_h v_h), ~ \forall u_h, v_h\in \mV
\end{align}
which is a weighted Hodge Laplacian, crucial for designing an efficient preconditioner.

Similar to the presentation in Section \ref{sec:darcy}, by chosing a basis of $\mV$ and $\mQ$, 
all the functions in these two Hilbert spaces can be identified as vectors. 
Then, we establish the matrix representation of the related operators. 

\begin{itemize}
\item Let $\bfG:\mathbb{R}^{N_n} \rightarrow \mathbb{R}^{N_e}$ be the grad matrix and let $\bfC:\mathbb{R}^{N_e} \rightarrow \mathbb{R}^{N_f}$ be the curl matrix. Those matrices are indeed the incidence matrices between sub-simplexes if correct scaled bases are chosen.

\smallskip
\item  With a fixed $\nu$, define $\bfM^{\nu}_{\dd}\in \mathbb{R}^{N_{\dd}\times N_{\dd}}$ as the mass matrices of the Lagrange, N\'ed\'elec and Raviart-Thomas shape functions with a weight function $\nu$, respectively, for $\dd = n,e,f$. In particular, if $\nu=1$, $\bfM^{1}_{\dd}$ denotes the standard mass matrix.

\smallskip
\item The matrix associated with $(Bu_h, q_h)_{\Omega}:=(\div_h u_h,q_h)_{\Omega}$ is $\bfB = \bfG^{\top} \bfM_e$.

\smallskip
\item With a fixed $\nu$, the matrix associated with $(\mathcal{A}^{\nu} \cdot , \cdot)_{\Omega}$ is 
$$
\bfA^{\nu} = \bfC^{\top} \bfM^{\nu}_f \bfC + \bfB^{\top} \bfM^{-1}_n \bfM^{\nu}_n  \bfM^{-1}_n \bfB.
$$ 

\item The matrix associated with $(\mathcal{L} u_h, v_h)_{\Omega} := (\nabla u_h, \nabla v_h)_{\Omega}$ is $\bfL =\bfG^{\top}\bfM_e\bfG$.
\end{itemize}

A good preconditioner would be $(\bfA^{\nu})^{-1}$, which is quite expensive. 
As the Hodge Laplacian can be solved very efficiently by multigrid(MG) methods \cite{2018ChenWuZhongZhou}, 
we choose
$$
\bfI_{\mV}^{-1} = {\rm MG}(\bfA^{\nu};\epsilon_{\rm mg})
$$
--the associated MG solver--as the approximation to $(\bfA^{\nu})^{-1}$, 
where $\epsilon_{\rm mg}$ controls the number of Vcycles by reducing the residual error to $\epsilon_{\rm mg}$ multiple of the initial one in each inner iteration. 
Note that $\bfI_{\mV}$ is a SPD matrix.
The tolerance $\epsilon_{\rm mg}$ does not have to be very small to achieve the fast convergence. Some specific choices of $\epsilon_{\rm mg}$ will be given for the numerical examples below.

At the next stage, we focus on developing the preconditioner for the space $\mQ$ by employing the flow  of $\IQ$ in \eqref{eq: EE scheme} and constructing a $\tilde{S}_k$ that can be efficiently computed,
since the Schur complement ${\bf S} = \bfB \bfI_{\mV}^{-1} \bfB^{\top}$ and its inverse are both expensive to compute. Here, we will use
\begin{equation}
\label{tildS}
\tilde{\bfS} = \bfB (\bfA^{\nu})^{-1} \bfB^{\top},
\end{equation}
and proceed to derive a closed-form expression for $\tilde{\bfS}$ which is essentially a mass matrix

\begin{lemma}
\label{lap_identity_disct}
There holds that
\begin{equation}
\label{lap_identity_disct_eq02}
\bfB(\bfA^{\nu})^{-1}\bfB^{\top} = \bfM_n ( \bfM^{\nu}_n)^{-1}  \bfM_n := \widetilde{\bfM}^{\nu}_n.
\end{equation}
\end{lemma}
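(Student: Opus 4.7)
The plan is to give a closed-form expression for $(\bfA^{\nu})^{-1}\bfB^{\top}$ by exploiting the discrete de Rham exact sequence. The crucial structural fact is the identity $\bfC\bfG = 0$ (the discrete curl annihilates the discrete gradient), which is inherited from the exactness of the Nédélec–Lagrange complex. A secondary ingredient is the relation $\bfB\bfG = \bfG^{\top}\bfM_e\bfG = \bfL$, where $\bfL$ is the SPD discrete Dirichlet Laplacian on $V_h^n(\Omega)$.

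Since $\bfA^{\nu}$ is SPD (this is precisely why the augmented $\div$-$\div$ term was added), it suffices to exhibit \emph{any} matrix $\bfY$ with $\bfA^{\nu}\bfY = \bfB^{\top}$ and it must coincide with $(\bfA^{\nu})^{-1}\bfB^{\top}$. Noting that $\bfB^{\top} = \bfM_e\bfG$ lies in the ``gradient block,'' I would try the ansatz $\bfY = \bfG\bfZ$ for an unknown $\bfZ \in \mathbb{R}^{N_n \times N_n}$. Substituting and using $\bfC\bfG = 0$ kills the curl–curl term, leaving
\begin{equation*}
\bfA^{\nu}\bfG\bfZ \;=\; \bfM_e\bfG\,\bfM_n^{-1}\bfM_n^{\nu}\bfM_n^{-1}\,\bfG^{\top}\bfM_e\bfG\,\bfZ \;=\; \bfM_e\bfG\,\bfM_n^{-1}\bfM_n^{\nu}\bfM_n^{-1}\bfL\,\bfZ.
\end{equation*}
Setting this equal to $\bfM_e\bfG$ and canceling $\bfM_e\bfG$ on the left (legitimate because $\bfG$ is injective on $V_h^n\cap H_0^1$, making $\bfM_e\bfG$ injective) yields
\begin{equation*}
\bfM_n^{-1}\bfM_n^{\nu}\bfM_n^{-1}\bfL\,\bfZ \;=\; \bfI, \qquad \text{hence} \qquad \bfZ \;=\; \bfL^{-1}\bfM_n(\bfM_n^{\nu})^{-1}\bfM_n.
\end{equation*}

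With this $\bfZ$ in hand, $(\bfA^{\nu})^{-1}\bfB^{\top} = \bfG\bfL^{-1}\bfM_n(\bfM_n^{\nu})^{-1}\bfM_n$, so multiplying on the left by $\bfB = \bfG^{\top}\bfM_e$ and again invoking $\bfB\bfG = \bfL$ collapses the $\bfL\bfL^{-1}$ pair and delivers $\bfB(\bfA^{\nu})^{-1}\bfB^{\top} = \bfM_n(\bfM_n^{\nu})^{-1}\bfM_n = \widetilde{\bfM}_n^{\nu}$, as claimed. The main subtlety I expect is the careful bookkeeping around where the exact-sequence identity $\bfC\bfG = 0$ is used and justifying that the ansatz $\bfY = \bfG\bfZ$ captures the full solution—this is automatic once $\bfA^{\nu}$ is invertible, but it is the conceptual step that turns the verification from a brute-force matrix inversion into a one-line calculation via the discrete Hodge structure.
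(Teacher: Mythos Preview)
Your proof is correct and follows essentially the same route as the paper: both arguments hinge on $\bfC\bfG=0$ to kill the curl--curl block, the identity $\bfB\bfG=\bfL$, and the invertibility of $\bfA^{\nu}$, arriving at the same intermediate formula $(\bfA^{\nu})^{-1}\bfB^{\top}=\bfG\bfL^{-1}\bfM_n(\bfM_n^{\nu})^{-1}\bfM_n$ before left-multiplying by $\bfB$. The only cosmetic difference is that you phrase it as an ansatz $\bfY=\bfG\bfZ$ followed by a cancellation (justified via injectivity of $\bfM_e\bfG$), whereas the paper computes $\bfA^{\nu}\bfG$ directly and then inverts; the content is identical.
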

\begin{proof}
From the matrix representation of differential operators, we need to show
\begin{equation}
\label{lap_identity_disct_eq1}
\bfM^{-1}_e ( \bfC^{\top} \bfM^{\nu}_f \bfC + \bfB^{\top} \bfM^{-1}_n \bfM^{\nu}_n  \bfM^{-1}_n \bfB ) \bfG = \bfG \bfM^{-1}_n \bfM^{\nu}_n \bfM^{-1}_n G^{\top} \bfM_e \bfG
\end{equation}
Notice that $\bfC \bfG = 0$ by the fact $\curl \grad = 0$ and  $\bfB = \bfG^{\top} \bfM_e$, which yields \eqref{lap_identity_disct_eq1}.
As for \eqref{lap_identity_disct_eq02}, we use \eqref{lap_identity_disct_eq1} to conclude $\bfA^{\nu} \bfG = \bfB^{\top} \bfM^{-1}_n \bfM^{\nu}_n \bfM^{-1}_n \bfL$ which further shows
\begin{equation}
\label{lap_identity_disct_eq2}
\bfB \bfG \bfL^{-1} \bfM_n (\bfM^{\nu}_n)^{-1}  \bfM_n  = \bfB (\bfA^{\nu})^{-1} \bfB^{\top}  .
\end{equation} 
We obtain \eqref{lap_identity_disct_eq02} from \eqref{lap_identity_disct_eq2} by $\bfB \bfG = \bfG^{\top} \bfM_e \bfG = \bfL$.
\end{proof}

In the discussion below, during iteration, $\nu$ will be updated by $\nu_k:=\nu(x,|\curl u_k(x)|)$, and at each step we update $\bfI_{Q_k}$ by
$$
\bfI_{Q_{k+1}} = \frac{1}{1+ \alpha_k\gamma_{k}}(\bfI_{Q_k} + \alpha_k \gamma_{k} \widetilde{\bfM}^{\nu_k}_n ), ~~~\text{with} ~~ \bfI_{Q_0} = \widetilde{\bfM}^{\nu_0}_n.
$$
With the use of mass lumping, where $\widetilde{\bfM}^{\nu}_n$ and $\bfI_{Q_k}$ all become diagonal, the computation of $\bfI_{\mQ_k}^{-1}$ becomes highly efficient.

\subsection{Numerical results}
We demonstrate the efficiency and effectiveness of the proposed TPDv Algorithm \ref{alg:intro TPDv}, and compare it with some other widely-used methods including the fixed-point (FP), inexact fixed-point (IFP), and the preconditioned projected gradient descent (PGD). 
Here, these methods are all accelerated by suitable preconditioners which we will elaborate shortly.
The FP reads as: given $\bfu_0$ and $\bfp_0$, for $k\geq 0$, solve the linear saddle point system
\begin{equation}
\label{FP_mat}
\left[\begin{array}{cc}
{\bfA}^{\nu_k} & {\bfB}^{\top} \\
{\bfB} & 0
\end{array}\right]
\left[\begin{array}{c}
\bfu_{k+1} \\
\bfp_{k+1}
\end{array}\right]
=
\left[\begin{array}{c}
\bfJ \\
0
\end{array}\right].
\end{equation}
In computation, we let $\epsilon_{F}=10^{-9}$ to achieve the accurate inverse for FP and $\epsilon_{F}=10^{-3}$ to accelerate the inner iteration which results in the IFP iteration.  
In addition, it is known that the system in \eqref{FP_mat} is very ill-conditioned, and thus we use the MG method based on the Hodge Laplacian \cite{2018ChenWuZhongZhou} as the preconditioner to accelerate the inner iterations of both the FP and IFP.

Next, for the PGD algorithm, we introduce the energy functional:
\begin{equation}
\label{PGD_energy}
E(u) =  \int_{\Omega} \tilde{\nu}(x, |\curl {u}(x) |) \dd x - ( \mathcal{J}, u )_{\Omega} + \frac{1}{2}\| \sqrt{\nu} Bu\|^2_{\Omega}, ~~~ \forall u\in \mathcal{V}\cap \ker(\div_h).
\end{equation}
where $\tilde{\nu}(x,s) = \int_0^s \nu(x,t)t \dd t$.
Then, we represent $u$ by the vector $\bfu$ and express the gradient as
\begin{equation}
\label{PGD_energy_grad}
\nabla E(\bfu)  = \bfA^{\nu(\bfu)} \bfu - \bfJ.
\end{equation}
The PGD algorithm reads as: 
given $u_0$ satisfying $\div_h u_0 = 0$ (or some nonhomogeneous condition) with its vector expression $\bfu_0$, then perform
\begin{equation}
\begin{split}
\label{PGD}
\bfu_{k+1} = \bfu_k  - \alpha_k  \bfP_{\bfI_{\mathcal{V}_k},\bfB}  \bfI^{-1}_{\mathcal{V}_k} \nabla E(\bfu_k),
\end{split}
\end{equation}
where the step size $\alpha_k = 1$, and $\bfP_{\bfI_{\mV_k},\bfB}$ is the projection matrix onto $\ker(\bfB)$ with respect to the inner product $(\cdot,\cdot)_{\bfI_{\mV_k}}$ associated with the SPD matrix $\bfI_{\mV_k}$.
Here, we point out that $\bfI_{\mV_k}$ could be also interpreted as the matrix to adjust the gradient direction, similar to the one used in (quasi-)Newton algorithms. 
In the numerical experiments,  we have observed that this matrix should match the metric used to define the projection for stability; otherwise the algorithm does not converge.

In this context, the selection of $\bfI^{-1}_{\mV_k} $  significantly influences the convergence speed. Similar to the TPDv, $\bfI_{\mV_k} $  should approximate ${\bfA}^{\nu}_k$  but needs to be chosen to balance its own computation and the inversion of the Schur complement. 
Our experiments suggest that $\bfI^{-1}_{\mV_k} ={\rm MG}(\bfA^{\nu}_k;\epsilon_{\rm mg})$ gives the fastest convergence,
and here, we fix $\epsilon_{\rm mg\_PGD}=0.01$.
Then, the ideal projection is $\bfP_{\bfI_{\mV_k},\bfB}   =   ( \bfI -   \bfI^{-1}_{\mV_k} \bfB^{\top}(\bfB \bfI^{-1}_{\mV_k} \bfB^{\top})^{-1} \bfB)$.
However, as the inverse of the Schur complement $\bfB\bfI^{-1}_{\mV_k}  \bfB^{\top}$ is expensive to compute,
following the spirit above we shall use the modified one:
\begin{equation}
\label{proj_mat}
\widetilde{\bfP}_{\bfI_{\mV_k},\bfB}   =   ( \bfI -   \bfI^{-1}_{\mV_k} \bfB^{\top} (\bfB (\bfA^{\nu_k})^{-1} \bfB^{\top})^{-1} \bfB).
\end{equation}
where the Schur complement $\bfB (\bfA^{\nu_k})^{-1} \bfB^{\top}$ is substituted with $\widetilde{\bfM}^{\nu_k}_n$ as derived from Lemma \ref{lap_identity_disct}; the inverse of the latter is much more accessible. 
Here, we point out that the operator in \eqref{proj_mat} deviates from a strict projection.
Nevertheless, our numerical experiments suggest that such an approach can significantly improve the efficiency.
It is worth noting, however, that even with these improvements, it does not reach the same level of effectiveness as the proposed TPDv algorithm.
Notice that one PGD iteration requires computing $\bfI^{-1}_{\mV_k}$ twice.

Next, we proceed to present the numerical experiments. We consider the modeling domain $\Omega = [-1,1]^3$, the exact solution:
\begin{equation}
\label{Maxwell_num1}
u = \cos(\omega x_1) \cos(\omega x_2) \cos(\omega x_3)
\left[\begin{array}{c}
0 \\
0 \\
1
\end{array}\right],
\end{equation}
where $\omega=4$ is the frequency, and the nonlinear electromagnetic permeability function:
\begin{equation}
\label{Maxwell_num2}
\nu(s) = a_0 + a_1 \exp(-a_2s).
\end{equation}
In the following experiments, we shall fix $a_0 = 10$ and $a_2=1$ but vary $a_1=70$ and $73.89$ to adjust the monotonicity of the function $\nu(s)s$; namely
\begin{equation}
\label{maxwell_monot_1}
(\nu(x,s)s - \nu(x,t)t)(s-t) \ge \nu_1(s-t)^2, ~~~ \forall s,t\ge 0, ~ x \in \Omega.
\end{equation}
where the constant $\nu_1$ further corresponds to the coercivity constant of this nonlinear PDE. 
We stop the iteration when the residual error decreases by a tolerance factor $ToL = 10^{-5}$.
As the convergence with respect to the mesh size is clear, here we ignore the related results.

We have also observed that the convergence is very insensitive to $\gamma$, and thus we fix $\gamma=0.5$.
In Table \ref{table:Maxwell_TPDvpara}, for the case of coefficient configuration $a_0 = 10, a_1 = 70, a_2=1$ and $\nu_1 = 0.5265$, we evaluate the performance of the TPDv algorithm with different values of the step size $\alpha$ and the tolerance $\epsilon_{\rm mg}$ in the MG solver, in terms of iteration step, CPU time, and computation cost measured by V-cycle units.
The results indicate that setting $\alpha = 1.3, \gamma = 0.5$, and $\epsilon_{\rm mg} = 0.1$ leads to a relatively more efficient configuration; 
namely the TPDv algorithm can indeed handle relatively large step size.
Furthermore, if different step sizes can be chosen for the primal and dual variables, the TPDv algorithm can potentially achieve even greater efficiency, as shown in the last column in Table \ref{table:Maxwell_TPDvpara}.

Next, in Table \ref{table:Maxwell}, we compare the TPDv Algorithm against the aforementioned FP, IFP and PGD algorithms.
In addition to the coefficient configuration $a_0 = 10, a_1 = 70, a_2=1$ and $\nu_1 = 0.5265$, we also include the case of $a_0 = 10, a_1 = 73.89, a_2=1$ and $\nu_1 = 7.59\times 10^{-5}$ where the coercivity constant becomes extremely small, making the optimization problem even more challenging.
From the results, one can clearly observe that the TPDv algorithm is much faster than all other methods tested.

\begin{table}[htp]
	\centering
\caption{Comparison of various step sizes $\alpha$ and tolerance $\epsilon_{\rm mg}$ in MG solver used in the TPDv algorithm.  Coefficient configuration: $a_0 = 10, a_1 = 70, a_2=1$ and $\nu_1 = 0.5265$.}
\label{table:Maxwell_TPDvpara}
	\renewcommand{\arraystretch}{1.25}
	\begin{tabular}{@{} c c  c c c c @{}}
	\toprule
                  & DoFs                   
&    \makecell{ $\alpha=1.3$ \\ $\epsilon_{\rm mg}=0.1$ }          
&   \makecell{ $\alpha=1$ \\ $\epsilon_{\rm mg}=0.2$ }           
&    \makecell{ $\alpha=0.9$ \\ $\epsilon_{\rm mg}=0.3$ }   
&    \makecell{ $\alpha_V=1.5$, $\alpha_Q=0.3$ \\ $\epsilon_{\rm mg}=0.2$ }     
 \smallskip \\  
 \hline 
\multirow{ 3}{*}{Iteration}      & 52832                  &    38               &     56         &     74    &   37               \\  
                                    & 440512                &    47                &    66          &     96   &   48              \\  
                                    & 3596672              &     42               &     64         &     89   &   45                \\ \hline
\multirow{ 3}{*}{Vcycle}  & 52832                &    329            &       374      &   459    &   235          \\ 
                                    & 440512                 &    461           &      453       &  466      &   325           \\  
                                    & 3596672               &    478           &      468         &  435     &   304           \\ \hline
\multirow{ 3}{*}{\makecell{CPU time \\(seconds)}}   & 52832       &    18              &     23        &    23       &    14          \\  
                                    & 440512                 &    237            &     280       &   280      &   199          \\ 
                                    & 3596672                &   2524           &     3013     &   3060   &   1807             \\ \bottomrule
\end{tabular} 
\end{table}

\begin{table}[htp]
	\centering
\caption{Comparison of various algorithms. Coefficients: $a_0 = 10$ and $a_2=1$. ``$-$" means the convergence is too slow, which cannot be finished in a reasonable time.}
\label{table:Maxwell}
	\renewcommand{\arraystretch}{1.25}
	\begin{tabular}{@{} c c | c c c c || c c c c @{}}
	\toprule
 &  & \multicolumn{4}{c||}{$a_1 = 73.89, \, \nu_1 = 7.59 \times 10^{-5}$}  & \multicolumn{4}{c}{$a_1 = 70, \, \nu_1 = 0.5265$}\\ \cline{3-10}
      & DoFs       &       FP        &  IFP       &    PGD    &    TPDv   &   FP       &  IFP      &    PGD    &    TPD
\\  
 \hline 
\multirow{4}{*}{Iteration}      & 6064       &    37       &     37      &      31     &    34       &    35        &     35      &    29      &  31      \\  
                                    & 52832   &    71      &     71      &     40      &     56       &    55       &     55       &    29       &  37      \\  
                                    & 440512   &    --      &    --      &     55     &      82       &    --       &     57       &    39       &  48     \\ 
                                    & 359667    &     --     &     --     &     44        &   87       &    --       &    --      &   36      &  45        \\ \hline
\multirow{4}{*}{Vcycle}      & 6064     &    8899     &     5055      &   537    &     190     &    8032     &    4570     &    496    &    166     \\  
                           & 52832   &   22895    &     13589    &   684    &     293     &    17025     &     9919     &   791    &    235         \\  
                                    & 440       &    --       &      --   &  1264    &    470      &    --               &     12445     &    882    &    325         \\ 
                                    & 359667        &    --    &     --  &  1201    &     478      &     --               &     --          &     960     &    304         \\ \hline
\multirow{4}{*}{\makecell{ CPU time \\    (seconds)   }}        & 6064      &    32           &    19              &    5.2     &   1.7         &    29               &      17          &    4.8     &   1.9          \\  
                                    & 52832         &    1061           &    640       &   67      &   18         &    733             &    451        &    75       &  14            \\ 
                             & 440512         &    --        &     --        &   1226     &   270       &    --            &    5644      &    856     &   199          \\
                                    & 3596672        &   --           &     --        &    11707        &  2977        &   --        &   --           &    10078      &   1807          \\ \bottomrule
\end{tabular} 
\end{table}

\section{Convergence Analysis}
\label{sec:converg}

In this section, we give the global linear convergence rate for the TPDv algorithms. 
Let us start with recalling some basic notation and concepts, see details in \cite{nesterov2003introductory}.

\subsection{Background}

In the discussion below, we overload the notation $\langle u,v\rangle_M: = \langle Mu,v \rangle $ and $\|u\|_M$, $\forall u,v\in \mV$, to represent the associated inner product and norm induced by a general linear SPD operator $M: \mV \rightarrow \mV$. We drop the subscript, especially when $M= I$.

Indeed, all these convexity, Lipschitz and conditioning constants depend on the metric $M$ and, of course, the function $f$ itself.
Furthermore, we have the following bounds using gradient norm for $f \in  \mathcal S_{\mu_{f,M}, L_{f,M}}$:
\begin{equation}
\label{eq: Breg bound}
     \frac{1}{2L_{f,M}} \|\nabla f(u) - \nabla f(v)\|^2_{M^{-1}} \leq D_f(u,v) \leq   \frac{1}{2\mu_{f,M}} \|\nabla f(u) - \nabla f(v)\|^2_{M^{-1}}  , \quad \forall  u, v \in \mV.
\end{equation}

Moreover, when $f$ is strongly convex, its convex conjugate,
\begin{equation*}
f^*: \mV \rightarrow \mathbb{R}, ~~~ \text{with} ~ f^*(\xi) = \max_{u \in \mV} ( \xi, u )- f(u),
\end{equation*}
is also convex. 
For $f \in \mathcal S_{\mu_{f,M}, L_{f,M}}$, we have $f^*\in \mathcal S_{\mu_{f^*,M^{-1}}, L_{f^*,M^{-1}}}$ and 
\begin{equation}
\label{eq: conj_rela}
\mu_{f^*,M^{-1}} = 1/L_{f,M}, \quad L_{f^*,M^{-1}} = 1/\mu_{f,M}.
\end{equation}
Furthermore, for any $u \in \mV, \xi \in \mV$, there holds
\begin{equation}
\label{eq: conj_rela_eq2}
\nabla f^*(\xi)=u \iff \nabla f(u)=\xi.
\end{equation}

\subsection{Exponential stability of the TPDv flow}

For the sake of simplicity, let us introduce the following notation to represent the right hand of the TPDv flow in \eqref{eq:TPDv flow}:
\begin{equation}\label{eq:Gnote}
\begin{split}
&\mathcal{G}^u(u,p) : =   -\mathcal{A}(u)  - B^{\top}p,  \\
& \mathcal{G}^p(u,p) : = Bu - b - B \IV^{-1}\mathcal{A}(u) - B\mathcal{I}^{-1}_\mathcal{V}  B^{\top} p, \\
&  \mathcal{G}^{Q}(\IQ) := \tilde S- \IQ.
\end{split}
\end{equation}
We further denote the TPDv flow as 
\begin{equation}
\label{eq: TPDrhs}
\left[\begin{array}{c}
u \\
p \\
\IQ
\end{array}\right]'
=\mathcal{G}(u, p, \IQ)
~~~
\text{with}~~
\mathcal{G}(u, p, \IQ)
:=
\left[\begin{array}{c}
\IV^{-1}\, \mathcal{G}^u(u,p) \\
\IQ^{-1}\, \mathcal{G}^p(u,p) \\
\gamma\, \mathcal{G}^{Q}(\IQ) 
\end{array}\right].
\end{equation}

Treating the Lyapunov function $\mathcal{E}(u,p, \IQ) =D_f(u, u^{\star}) + \frac{1}{2} \|p-p^{\star}\|^2_{\IQ}$ in \eqref{eq: Lyapunov} as a function of $t$, 
we write its temporal derivative as 
\begin{equation}
\label{eq_Ed}
    \mathcal E'(u, p, \IQ)  = \inprd{  \partial_u \mathcal{E}(u,p,\IQ) , \IV^{-1} \mathcal{G}^u(u,p) } + \inprd{  \partial_p \mathcal{E}(u,p,\IQ) , \IQ^{-1} \mathcal{G}^p(u,p) } + \frac{1}{2}\|p-p^{\star}\|^2_{\IQ'}.
\end{equation}
The crucial step for the proposed convergence analysis is to verify the strong Lyapunov property \cite{chen2021unified} which yields the exponential stability.
Namely, the main objective of this subsection is to prove Theorem \ref{thm:expo stability}.
For this purpose, given any SPD linear operator $M$, we introduce the function
\begin{equation}\label{eq:e_M}
    \begin{aligned}
    e_M(\xi) &= \xi - M\nabla f^*(\xi),
    \end{aligned}
\end{equation}
which can be thought of as one gradient descent step of $f^*$ at $\xi$ under the metric induced by $M$. Define $L_{e,M^{-1}}$ as the least positive number such that
\begin{equation}\label{eq: Le_M}
    \begin{aligned}
    \|e_M(\xi_1) - e_M(\xi_2)\|^2_{M^{-1}} \leq L_{e,M^{-1}} \|\xi_1-\xi_2\|^2_{M^{-1}},\quad \forall \xi_1,\xi_2 \in \mV.  
    \end{aligned}
\end{equation}
Notice that $e_M(\xi)$ is called a contractive map if $L_{e,M^{-1}} < 1$. We shall derive a sufficient and necessary condition for $e_M(\xi)$ being contractive in the following lemma.

\begin{lemma}
\label{lem: Equivalence between Le and muf} 
Suppose $f \in \mathcal S_{\mu_{f,M}, L_{f,M}}$. 
Then, $L_{e,M^{-1}} < 1$ if and only if $\mu_{f,M} >1/ 2$. 
Furthermore, if $\mu_{f,M} > 1/2$, we have
\begin{equation}
\label{eq: LEM}
L_{e,M^{-1}} \leq 1 -  \frac{(2\mu_{f,M} - 1)}{\mu_{f,M}L_{f,M}}.
\end{equation}
\end{lemma}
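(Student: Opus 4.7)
The plan is to expand the squared $M^{-1}$-norm of $e_M(\xi_1) - e_M(\xi_2)$ and then to lower-bound the resulting cross term using a carefully weighted convex combination of two classical inequalities for functions in $\mathcal S_{\mu_{f,M}, L_{f,M}}$.

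Fix $\xi_1, \xi_2 \in \mV$ and set $u_i = \nabla f^*(\xi_i)$, so by \eqref{eq: conj_rela_eq2} we have $\xi_i = \nabla f(u_i)$. Writing $e_M(\xi_1) - e_M(\xi_2) = (\xi_1 - \xi_2) - M(u_1 - u_2)$ and expanding in the $M^{-1}$-inner product,
\begin{equation*}
\|e_M(\xi_1) - e_M(\xi_2)\|^2_{M^{-1}} = \|\xi_1 - \xi_2\|^2_{M^{-1}} - 2\langle u_1 - u_2,\, \xi_1 - \xi_2\rangle + \|u_1 - u_2\|^2_{M}.
\end{equation*}
Since $f\in \mathcal S_{\mu_{f,M}, L_{f,M}}$, two estimates are available: strong convexity in the $M$-norm gives $\langle u_1-u_2,\xi_1-\xi_2\rangle \geq \mu_{f,M}\|u_1-u_2\|_M^2$, and the Baillon--Haddad co-coercivity (equivalent to $L_{f,M}$-Lipschitz gradient in the $M^{-1}$ dual norm and derivable from \eqref{eq: Breg bound} together with the symmetry identity $\langle \nabla f(u)-\nabla f(v),u-v\rangle = D_f(u,v)+D_f(v,u)$) gives $\langle u_1-u_2,\xi_1-\xi_2\rangle \geq L_{f,M}^{-1}\|\xi_1-\xi_2\|_{M^{-1}}^2$. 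Taking a convex combination with weights $a$ and $1-a$,
\begin{equation*}
\langle u_1-u_2,\xi_1-\xi_2\rangle \geq a\,\mu_{f,M}\|u_1-u_2\|_M^2 + (1-a)\,L_{f,M}^{-1}\|\xi_1-\xi_2\|_{M^{-1}}^2.
\end{equation*}

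The crucial step is to choose $a$ so that doubling the first term exactly cancels the $\|u_1-u_2\|_M^2$ in the expansion, i.e.\ $2a\mu_{f,M} = 1$. This forces $a = 1/(2\mu_{f,M})$, which lies in $(0,1)$ precisely under the hypothesis $\mu_{f,M} > 1/2$. Substituting back gives
\begin{equation*}
\|e_M(\xi_1) - e_M(\xi_2)\|^2_{M^{-1}} \leq \left(1 - \frac{2\mu_{f,M}-1}{\mu_{f,M} L_{f,M}}\right)\|\xi_1-\xi_2\|^2_{M^{-1}},
\end{equation*}
which is \eqref{eq: LEM} and in particular shows $L_{e,M^{-1}} < 1$. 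For the converse, assuming $\mu_{f,M}\leq 1/2$, I would establish $L_{e,M^{-1}} \geq 1$ by an infinitesimal/quadratic reduction: take $\xi_2 = \xi_1 + \epsilon h$ and use $\nabla f^*(\xi_2) - \nabla f^*(\xi_1) = \epsilon\,\nabla^2 f^*(\xi_1) h + o(\epsilon)$, so that as $\epsilon\to 0$ the ratio $\|e_M(\xi_2)-e_M(\xi_1)\|^2_{M^{-1}}/\|\xi_2-\xi_1\|^2_{M^{-1}}$ tends to the squared $M^{-1}$-operator-norm of $I - M\nabla^2 f^*(\xi_1)$. The generalized eigenvalues of $M\nabla^2 f^*(\xi_1)$ lie in $[1/L_{f,M}, 1/\mu_{f,M}]$, and selecting $\xi_1$ and $h$ along a direction realizing (or approaching) the largest eigenvalue $1/\mu_{f,M}$ yields a contribution $(1-1/\mu_{f,M})^2 \geq 1$ when $\mu_{f,M} \leq 1/2$.

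The main obstacle — and the one insight that makes the proof work — is identifying the sharp convex-combination weight $a = 1/(2\mu_{f,M})$: it is the unique choice that exactly annihilates the $\|u_1-u_2\|_M^2$ term in the expansion and that makes the threshold $\mu_{f,M} = 1/2$ appear naturally. Once this weight is chosen, the forward direction is a direct substitution, and the converse follows from the standard spectral analysis of preconditioned gradient descent applied to the infinitesimal (Hessian) limit.
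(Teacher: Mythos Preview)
Your forward direction (deriving the bound \eqref{eq: LEM} when $\mu_{f,M} > 1/2$) is correct and, after unwinding the duality $f \leftrightarrow f^*$, coincides with the paper's argument. The paper works on the dual side: it bounds $\|\nabla f^*(\xi_1)-\nabla f^*(\xi_2)\|_M^2 \leq L_{f^*,M^{-1}}\langle\xi_1-\xi_2,\nabla f^*(\xi_1)-\nabla f^*(\xi_2)\rangle$ (co-coercivity of $f^*$, which is your strong-convexity inequality for $f$) and then $\langle\cdot,\cdot\rangle \geq \mu_{f^*,M^{-1}}\|\xi_1-\xi_2\|_{M^{-1}}^2$ (strong convexity of $f^*$, which is your Baillon--Haddad inequality for $f$). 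Your convex-combination weight $a=1/(2\mu_{f,M})$ is simply another way to organize the same two substitutions.

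Your converse, however, has a genuine gap. You invoke $\nabla^2 f^*(\xi_1)$ and a first-order Taylor expansion, but the hypothesis $f\in\mathcal S_{\mu_{f,M},L_{f,M}}$ does not supply twice differentiability of $f$ or of $f^*$. Even granting a Hessian, you would still need the extremal generalized eigenvalue $1/\mu_{f,M}$ to be attained (or approached) at some point $\xi_1$, which is not guaranteed by the definition of $\mu_{f,M}$ as a global lower bound on the Bregman divergence. The paper avoids both issues with an elementary argument: from $L_{e,M^{-1}}<1$ and the very expansion you wrote, one reads off $\|\nabla f^*(\xi_1)-\nabla f^*(\xi_2)\|_M^2 < 2\langle \xi_1-\xi_2,\nabla f^*(\xi_1)-\nabla f^*(\xi_2)\rangle$, and a single Cauchy--Schwarz on the right-hand side gives $L_{f^*,M^{-1}}<2$, i.e.\ $\mu_{f,M}>1/2$, with no smoothness beyond what is already assumed.
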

\begin{proof}
On one hand, suppose $L_{e,M^{-1}} < 1$. Then, for any $\xi_1, \xi_2 \in \mV$, we can expand the squares
\begin{equation}\label{e(u1) - e(u2) square}
    \begin{aligned}
    \|e_M(\xi_1) - e_M(\xi_2)\|_{M^{-1} }^2 ={}& 
 \|\xi_1-\xi_2\|_{M^{-1}}^2+ \|\nabla f^*(\xi_1) - \nabla f^*(\xi_2)\|_{M}^2 \\
    &- 2 \langle \xi_1-\xi_2, \nabla f^*(\xi_1) - \nabla f^*(\xi_2)\rangle .
    \end{aligned}
\end{equation}
As $L_{e,M^{-1}} < 1$, we have $\|e_M(\xi_1) - e_M(\xi_2)\|_{M^{-1}}^2  < \|\xi_1 - \xi_2\|_{M^{-1}}^2$.
Then, by \eqref{e(u1) - e(u2) square}, we obtain
\begin{equation*}
    \begin{aligned}
  \|\nabla f^*(\xi_1) - \nabla f^*(\xi_2)\|_{M}^2  &< 2\langle \xi_1-\xi_2, \nabla f^*(\xi_1) - \nabla f^*(\xi_2) \rangle \\
    & \leq 2 \|\nabla f^*(\xi_1) - \nabla f^*(\xi_2)\|_{M} \|\xi_1 - \xi_2\| _{M^{-1}},
    \end{aligned}
\end{equation*}
which implies $L_{f^*,M^{-1}} < 2$, i.e., $\mu_{f,M} > 1/2$ by \eqref{eq: conj_rela}.

On the other hand, we suppose $\mu_{f,M} > 1/2$ and thus, $L_{f^*,M^{-1}} < 2$. For $\xi_1, \xi_2 \in \mV$, we need to employ the following inequality \cite[Chapter 2]{nesterov2003introductory}
$$  \|\nabla f^*(\xi_1) - \nabla f^*(\xi_2)\|_{M}^2 \leq L_{f^*,M^{-1}} \langle  \nabla f^*(\xi_1) - \nabla f^*(\xi_2), \xi_1-\xi_2 \rangle,$$
and, by \eqref{e(u1) - e(u2) square}, we obtain
\begin{equation}
\label{eq: LEM_eq1}
\|e_M(\xi_1) - e_M(\xi_2)\|_{M^{-1} }^2 < (1 - (2-L_{f^*,M^{-1}})\mu_{f^*,M^{-1}}))\|\xi_1 - \xi_2\|_{M^{-1}}^2,
\end{equation}
which implies $L_{e,M^{-1}}  < 1$. So far, we have proved the equivalence between $L_{e,M^{-1}} < 1$ and $\mu_{f,M} >1/ 2$.
At last, \eqref{eq: LEM} follows from \eqref{eq: LEM_eq1} and the relation in \eqref{eq: conj_rela}.
\end{proof}

Switching back to $f$ and $u$, 
we then conclude the following corollary.

\begin{corollary}
\label{coro_fu12}
    Suppose $f \in \mathcal S_{\mu_{f,M}, L_{f,M}}$ with $\mu_{f,M} > 1/2$, then 
\begin{equation}\label{eq: approx ineq}
    \|\nabla f(u_1) - \nabla f(u_2) - M (u_1 - u_2)\|^2_{M^{-1}} \leq L_{e,M^{-1}} \|\nabla f(u_1) - \nabla f(u_2) \|^2_{M^{-1}}, \quad \forall u_1,u_2 \in \mV.
\end{equation}
\end{corollary}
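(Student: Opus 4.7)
The plan is to obtain the corollary directly from Lemma \ref{lem: Equivalence between Le and muf} via the Legendre--Fenchel duality between $f$ and $f^{*}$, using the substitution $\xi_i = \nabla f(u_i)$. The hypothesis $\mu_{f,M} > 1/2$ is what guarantees $L_{e,M^{-1}} < 1$ via the lemma, making the bound meaningful (a genuine contraction), but the inequality itself is just the definition of $L_{e,M^{-1}}$ transported through the duality.

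First, I would set $\xi_1 := \nabla f(u_1)$ and $\xi_2 := \nabla f(u_2)$. Because $f \in \mathcal{S}_{\mu_{f,M}, L_{f,M}}$ is strongly convex, the relation \eqref{eq: conj_rela_eq2} gives $\nabla f^{*}(\xi_i) = u_i$. Plugging this into the definition \eqref{eq:e_M} of $e_M$ yields
\begin{equation*}
    e_M(\xi_i) = \xi_i - M \nabla f^{*}(\xi_i) = \nabla f(u_i) - M u_i,
\end{equation*}
so that
\begin{equation*}
    e_M(\xi_1) - e_M(\xi_2) = \bigl(\nabla f(u_1) - \nabla f(u_2)\bigr) - M(u_1 - u_2).
\end{equation*}

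Next, I would apply the defining inequality \eqref{eq: Le_M} of $L_{e,M^{-1}}$ to $\xi_1, \xi_2$:
\begin{equation*}
    \|e_M(\xi_1) - e_M(\xi_2)\|^2_{M^{-1}} \leq L_{e,M^{-1}} \, \|\xi_1 - \xi_2\|^2_{M^{-1}}.
\end{equation*}
Substituting the identities above on both sides gives exactly \eqref{eq: approx ineq}. The condition $\mu_{f,M} > 1/2$ need not enter the algebra itself; it only enters through Lemma \ref{lem: Equivalence between Le and muf}, which ensures $L_{e,M^{-1}} < 1$ (with the explicit bound \eqref{eq: LEM}), so that the corollary indeed provides a contraction-type estimate useful in the subsequent Lyapunov analysis.

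There is no real obstacle here; the only subtle point is recognizing the symmetry between the Lipschitz-type bound on $e_M$ in the $M^{-1}$-metric and the ``linearization residual'' of $\nabla f$ under the metric $M^{-1}$. Once one writes $u_i = \nabla f^{*}(\xi_i)$, the corollary is a one-line translation of the lemma.
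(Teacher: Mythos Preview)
Your proposal is correct and follows essentially the same approach as the paper: define $\xi_i = \nabla f(u_i)$, invoke the duality relation \eqref{eq: conj_rela_eq2} to get $\nabla f^{*}(\xi_i) = u_i$, and substitute into the defining inequality \eqref{eq: Le_M}. The paper's proof is even more terse (two sentences), but the content is identical.
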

\begin{proof}
Define $\xi_1 = \nabla f(u_1)$ and $\xi_2 = \nabla f(u_2)$.
Then, by substituting the duality relation in \eqref{eq: conj_rela_eq2} into \eqref{eq: Le_M}, 
we equivalently have the desired result.
\end{proof}
In the following theorem, we state and verity the strong Lyapunov property  \cite{chen2021unified}, which plays a crucial part in the convergence analysis.

\begin{theorem}[Strong Lyapunov property]
\label{thm: Strong Lyapunov property}
Given any two SPD matrices $\IV$ and $\IQ$, 
assume $f \in \mathcal S_{\mu_{f,\IV}, L_{f,\IV}}$ with $\mu_{f,\IV} > 1/2$. Let \ $\beta =  (\mu_{f,\IV} - 1/2)\mu^{-1}_{f,\IV} L^{-1}_{f,\IV} \in (0,1/2)$ and $\gamma =\beta \mu_{S, \tilde S}$. Then for the Lyapunov function \eqref{eq: Lyapunov} and TPDv flow \eqref{eq:Gnote}, we have 
\begin{equation*}
\inprd{  \partial_u \mathcal{E} , \IV^{-1} \mathcal{G}^u } + \inprd{  \partial_p \mathcal{E} , \IQ^{-1} \mathcal{G}^p } + \frac{1}{2}\|p-p^{\star}\|^2_{\gamma \mathcal{G}^{Q}(\IQ)}
    \leq -  \frac{\beta}{2} \|\nabla f(u) - \nabla f(u^{\star})\|_{\IV}^2  -\frac{\gamma}{2}\| p-p^{\star}\|_{\IQ}^2, 
\end{equation*}
\end{theorem}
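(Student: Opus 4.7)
My plan is to (i) compute the partial derivatives of $\mathcal{E}$ and use the KKT conditions to rewrite $\mathcal{G}^u, \mathcal{G}^p$ in terms of $a := \nabla f(u) - \nabla f(u^{\star})$, $w := B^{\top}(p - p^{\star})$, $c := u - u^{\star}$ and $\eta := a + w$; (ii) derive a one-line identity for the sum of the two drift inner products; (iii) invoke the approximation estimate of Corollary~\ref{coro_fu12} to eliminate $c$ in favor of a residual $r$; and (iv) close the argument with a Cauchy--Schwarz/AM--GM sandwich that pins the conclusion to the bound $\sqrt{L_{e,\IV^{-1}}}\leq 1-\beta$ afforded by Lemma~\ref{lem: Equivalence between Le and muf}.

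For step (ii), note that $\partial_u\mathcal{E} = a$ and $\IQ^{-1}\partial_p\mathcal{E} = p - p^{\star}$, while the KKT conditions $\nabla f(u^{\star}) + B^{\top}p^{\star} = 0$ and $Bu^{\star} = b$ give $\mathcal{G}^u = -\eta$ and $\mathcal{G}^p = Bc - B\IV^{-1}\eta$. A direct expansion combines the two cross contributions involving $\inprd{a, \IV^{-1}w}$ into the clean identity
$$\inprd{\partial_u\mathcal{E}, \IV^{-1}\mathcal{G}^u} + \inprd{\partial_p\mathcal{E}, \IQ^{-1}\mathcal{G}^p} = -\|\eta\|^2_{\IV^{-1}} + \inprd{w, c}.$$
The $\IQ$-flow contribution expands as $\tfrac{1}{2}\|p-p^{\star}\|^2_{\gamma\mathcal{G}^Q(\IQ)} = \tfrac{\gamma}{2}\|p-p^{\star}\|^2_{\tilde S} - \tfrac{\gamma}{2}\|p-p^{\star}\|^2_{\IQ}$, and the choice $\gamma = \beta\mu_{S,\tilde S}$ together with $\mu_{S,\tilde S}\tilde S \preccurlyeq S$ gives $\tfrac{\gamma}{2}\|p-p^{\star}\|^2_{\tilde S} \leq \tfrac{\beta}{2}\|w\|^2_{\IV^{-1}}$. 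After cancelling the common $-\tfrac{\gamma}{2}\|p-p^{\star}\|^2_{\IQ}$ from both sides, the theorem reduces to the purely algebraic claim
$$\|\eta\|^2_{\IV^{-1}} \geq \inprd{w, c} + \tfrac{\beta}{2}\bigl(\|a\|^2_{\IV^{-1}} + \|w\|^2_{\IV^{-1}}\bigr).$$

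The decisive step (iii) is to eliminate $c$ via the residual $r := a - \IV c$: Corollary~\ref{coro_fu12} gives $\|r\|^2_{\IV^{-1}} \leq L_{e,\IV^{-1}}\|a\|^2_{\IV^{-1}}$. Writing $c = \IV^{-1}(a - r)$ turns $\inprd{w, c}$ into $\inprd{a, w}_{\IV^{-1}} - \inprd{w, r}_{\IV^{-1}}$, and expanding $\|\eta\|^2_{\IV^{-1}} = \|a\|^2_{\IV^{-1}} + 2\inprd{a,w}_{\IV^{-1}} + \|w\|^2_{\IV^{-1}}$ cancels a factor of $\inprd{a,w}_{\IV^{-1}}$, reducing the target to
$$(1 - \tfrac{\beta}{2})\bigl(\|a\|^2_{\IV^{-1}} + \|w\|^2_{\IV^{-1}}\bigr) + \inprd{a, w}_{\IV^{-1}} + \inprd{w, r}_{\IV^{-1}} \geq 0.$$
Cauchy--Schwarz in the $\IV^{-1}$-metric bounds $\inprd{a,w}_{\IV^{-1}} + \inprd{w,r}_{\IV^{-1}} \geq -(1 + \sqrt{L_{e,\IV^{-1}}})\|a\|_{\IV^{-1}}\|w\|_{\IV^{-1}}$, while AM--GM gives $(1-\tfrac{\beta}{2})(\|a\|^2 + \|w\|^2) \geq (2 - \beta)\|a\|\|w\|$; the inequality thus collapses to $\sqrt{L_{e,\IV^{-1}}} \leq 1 - \beta$, which Lemma~\ref{lem: Equivalence between Le and muf} delivers via $L_{e,\IV^{-1}} \leq 1 - 2\beta \leq (1-\beta)^2$. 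The main obstacle I anticipate is precisely this choice of decomposition: a first-instinct bound of $\inprd{w, c}$ through the mixed pair $(\|w\|_{\IV^{-1}}, \|c\|_{\IV})$ and then controlling $\|c\|_{\IV}$ via the convexity inequality $\inprd{a, c} \geq \tfrac{1 - L_{e,\IV^{-1}}}{2}\|a\|^2_{\IV^{-1}} + \tfrac{1}{2}\|c\|^2_{\IV}$ yields a quadratic form in $(\|a\|, \|w\|)$ whose discriminant is negative by an amount of order $\beta^2$ and breaks the argument; trading $c$ for $r$ is what surfaces $\sqrt{L_{e,\IV^{-1}}}$ rather than an $O(1)$ constant in Cauchy--Schwarz, and the pinch of Lemma~\ref{lem: Equivalence between Le and muf} just closes the gap.
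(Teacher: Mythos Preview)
Your proof is correct and follows essentially the same approach as the paper. Both arguments hinge on Corollary~\ref{coro_fu12} to control the residual $r = a - \IV c$, reduce to the scalar inequality $\sqrt{L_{e,\IV^{-1}}}\leq 1-\beta$, and close via Lemma~\ref{lem: Equivalence between Le and muf} together with the choice $\gamma=\beta\mu_{S,\tilde S}$; the only difference is cosmetic packaging---your compact identity $-\|\eta\|_{\IV^{-1}}^2+\inprd{w,c}$ followed by Cauchy--Schwarz and AM--GM replaces the paper's explicit split into two squares plus two cross terms, where one cross term is absorbed by completing a square and the other is handled by Young's inequality with parameter $\theta=\sqrt{L_{e,\IV^{-1}}}$.
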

\begin{proof}
The equilibrium point $(u^{\star}, p^{\star})$ satisfies $
\mathcal{G}^u(u^{\star},p^{\star}) = 0,$ and $\mathcal{G}^p(u^{\star},p^{\star})=0$. 
Then direct computation gives
\begin{equation*}
\begin{aligned}
      &-\inprd{  \partial_u \mathcal{E} , \IV^{-1} \mathcal{G}^u } -\inprd{  \partial_p \mathcal{E} , \IQ^{-1} \mathcal{G}^p } \\
       ={}& - \langle \nabla f(u) - \nabla f(u^{\star}),  \mathcal{G}^u(u,p) -  \mathcal{G}^u(u^{\star},p^{\star})\rangle_{\IV^{-1}} - \langle p - p^{\star},  \mathcal{G}^p(u,p)-\mathcal{G}^p(u^{\star},p^{\star})\rangle \\
               ={}&  \|\nabla f(u) - \nabla f(u^{\star})\|_{\IV^{-1}}^2+ (\nabla f(u)-\nabla f(u^{\star}), B^\top(p-p^{\star}) )_{\IV^{-1}}  \\
    &  +\|p- p^{\star}\|_S^2+(B^\top(p-p^{\star}), \nabla f(u) - \IV u - (\nabla f(u^{\star}) - \IV u^{\star}) )_{\IV^{-1}}. 
\end{aligned}
\end{equation*}
which contains $2$ positive square terms and $2$ cross terms. We write 
$$
\|p- p^{\star}\|_S^2 = \| B^{\top}(p- p^{\star})\|_{\IV^{-1}}^2, \quad\text{ as } S = B\IV^{-1}B^{\top} = B\IV^{-1/2} (B\IV^{-1/2})^{\top},
$$
and use half of the square terms to bound the first cross term:
\begin{equation*}
    \begin{aligned}
 & \frac{1}{2}\|\nabla f(u) - \nabla f(u^{\star})\|_{\IV^{-1}}^2 + \langle \nabla f(u)-\nabla f(u^{\star}), \IV^{-1}B^\top(p-p^{\star}) \rangle  +\frac{1}{2} \|p- p^{\star}\|_S^2 \\
 ={}& \frac{1}{2}\|\nabla f(u) - \nabla f(u^{\star})  + B^{\top}(p - p^{\star})\|_{\IV^{-1}}^2  \geq 0,
    \end{aligned}
\end{equation*}
which is non-negative. Next, for any $\theta > 0$, the second cross term can be founded by the Young's inequality
\begin{equation*}
    \begin{aligned}
   &\langle B^{\top}(p-p^{\star}), \nabla f(u) - \IV u - (\nabla f(u^{\star}) - \IV u^{\star})\rangle_{\IV^{-1}} \\
   \leq{}&  \frac{1}{2\theta} \|\nabla f(u) - \nabla f(u^{\star}) - \IV(u - u^{\star})\|_{\IV^{-1}}^2 + \frac{\theta}{2} \|p - p^{\star}\|_S^2  \\
    \leq{}& \frac{ L_{e, \IV}}{2\theta} \|\nabla f(u) - \nabla f(u^{\star}) \|_{\IV^{-1}}^2 + \frac{\theta}{2} \|p - p^{\star}\|_S^2.
    \end{aligned}
\end{equation*}
Therefore, with the flow of $\IQ$, we have 
\begin{equation*}
    \begin{aligned}
    &-\inprd{  \partial_u \mathcal{E} , \IV^{-1} \mathcal{G}^u } -\inprd{  \partial_p \mathcal{E} , \IQ^{-1} \mathcal{G}^p } - \frac{1}{2}\|p-p^*\|^2_{\IQ'}\\
    \geq{} &  \frac{1}{2}\left (1- \frac{ L_{e, \IV}}{\theta}\right)\|\nabla f(u) - \nabla f(u^{\star})\|_{\IV^{-1}}^2  + \frac{1}{2}(1-\theta)\| p-p^{\star}\|_{S}^2  + \frac{\gamma}{2}\| p-p^{\star}\|_{\IQ-\tilde S}^2  \\
     \geq{} &  \frac{\beta}{2}\|\nabla f(u) - \nabla f(u^{\star})\|_{\IV^{-1}}^2  + \frac{1}{2}\| p-p^{\star}\|_{\beta S - \gamma \tilde S}^2  + \frac{\gamma}{2}\| p-p^{\star}\|_{\IQ}^2 \\
     \geq {} &  \frac{\beta}{2} \left (\|\nabla f(u) - \nabla f(u^{\star})\|_{\IV^{-1}}^2  + \mu_{S, \tilde S}\| p-p^{\star}\|_{\IQ}^2 \right).
    \end{aligned}
\end{equation*}
The second inequality holds as we choose $\theta = \sqrt{ L_{e, \IV}}$ and show that 
$$ \beta = (\mu_{f,\IV} - 1/2)\mu^{-1}_{f,\IV} L^{-1}_{f,\IV}= (1-   L_{e, \IV} )/2 \leq  1 -  \sqrt{ L_{e, \IV}}.$$
In the last inequality , we set $\gamma = \beta\mu_{S, \tilde S}$ such that
$$ \gamma \tilde S \preccurlyeq \beta S,$$
and drop the nonnegative term involving $\beta 
S - \gamma \tilde S$. By a sign change we finish the proof .
\end{proof}

With the preparation above, we are ready to prove one of our main results: Theorem \ref{thm:expo stability}.

\begin{proof}(Proof of Theorem \ref{thm:expo stability}.)
Using the result of Theorem \ref{thm: Strong Lyapunov property},
\begin{equation*}
    \begin{aligned}
         \mathcal E'(u, p, \IQ)  \leq  -\frac{\beta}{2} \left(\|\nabla f(u) - \nabla f(u^{\star})\|_{\IV}^2  + \mu_{S, \tilde S}\| p-p^{\star}\|_{\IQ}^2  \right).
    \end{aligned}
\end{equation*}

With the bound of Bregman divergence \eqref{eq: Breg bound}, we have 
\begin{equation*}
    \begin{aligned}
         \mathcal E'(u, p, \IQ)  & \leq   -\beta  \left(\mu_{f}D_f(u, u^{\star})  + \frac{1}{2} \mu_{S, \tilde S}\| p-p^{\star}\|_{\IQ}^2\right) \\
         & \leq -  \tilde\mu \, \mathcal{E}(u,p, \IQ)
    \end{aligned}
\end{equation*}
where $\tilde{\mu} = \beta \min \left \{  \mu_{f,\IV(t)}, \mu_{S, \tilde S} \right\}$.
Direct integration gives the desired exponential decay in \eqref{eq:exp_eq01}.
\end{proof}

\subsection{Convergence analysis for the explicit scheme} 
We focus on the explicit scheme in \eqref{eq: EE scheme}.
Denoted by
\begin{equation}\label{eq:Gdiscrete}
    \mathcal G_{k} =\begin{pmatrix}
    \IVki \, \mathcal G^u(u_{k}, p_{k}) \\
    \IQki \, \mathcal G^p(u_{k}, p_{k})      \\   
    \gamma_k \, \mathcal G^{Q}(\IQkp)  
\end{pmatrix}:=\begin{pmatrix}
    \IVki \, \mathcal G^u_k \\
    \IQki \, \mathcal G^p_k     \\   
    \gamma_k \, \mathcal G^{Q}_{k+1}
\end{pmatrix} .
\end{equation}
 We state the discrete strong Lyapunov property in the following theorem. 

\begin{lemma}[Discrete strong Lyapunov property]
\label{thm: discrete Strong Lyapunov property}
Given any two SPD matrices $\IVk$ and $\IQk$, 
assume $f \in \mathcal S_{1, L_{f,\IVk}}$. Then for the Lyapunov function \eqref{eq: discrete Lyapunov} and $(u_k, p_k, \IQk)$ generated by the explicit scheme with initial value $(u_0, p_0, \mathcal I_{\mQ_0})$ and $\mathcal I_{\mQ_0}$ being SPD with 
$$
\beta_k =  \frac{1}{2L_{f, \IVk} },  \quad \gamma_k =\beta_k \mu_{S_k,\tilde S_k},
$$ 
we have the discrete Strong Lyapunov property
\begin{equation}
\label{eq: discrete Strong Lyapunov property}
\begin{aligned}
    &-\inprd{  \partial_u \mathcal{E}(u_k,p_k, \IQk) ,\IVki \mathcal{G}_k^u } -\inprd{  \partial_p \mathcal{E}(u_k,p_k, \IQk), \IQki  \mathcal{G}^p_k } - \frac{1}{2}\|p_k-p^{\star}\|^2_{\gamma_k\mathcal{G}^Q_{k+1}} \\
  \geq{}  & \frac{\beta_k}{2}\|\nabla f(u_k) - \nabla f(u^{\star})\|_{\IVk}^2  +  \frac{\gamma_k}{2} \| p-p^{\star}\|_{\IQkp}^2.
\end{aligned}
\end{equation}
\end{lemma}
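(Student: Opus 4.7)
The plan is to mirror the proof of Theorem \ref{thm: Strong Lyapunov property} with the key observation that, in the explicit scheme, $\mathcal{G}^Q_{k+1} = \tilde S_k - \IQkp$ depends on the \emph{updated} $\IQkp$, which is exactly what allows the target metric $\IQkp$ to emerge on the right-hand side. First, using the KKT identities $\nabla f(u^{\star}) + B^\top p^{\star} = 0$ and $Bu^{\star} = b$, I would rewrite $\mathcal{G}^u_k = -(\nabla f(u_k) - \nabla f(u^{\star})) - B^\top(p_k - p^{\star})$ and $\mathcal{G}^p_k = B(u_k - u^{\star}) - B\IVki[(\nabla f(u_k) - \nabla f(u^{\star})) + B^\top(p_k - p^{\star})]$. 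Substituting these into the two inner products---with $\partial_u \mathcal{E} = \nabla f(u_k) - \nabla f(u^{\star})$ and $\partial_p \mathcal{E} = \IQk(p_k - p^{\star})$ so that the $\IQki$ factor cancels cleanly---produces two nonnegative quadratic pieces, $\|\nabla f(u_k) - \nabla f(u^{\star})\|^2_{\IVki}$ and $\|p_k - p^{\star}\|^2_{S_k}$, together with two cross terms.

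Next, I would rewrite the cross term $-\langle p_k - p^{\star}, B(u_k - u^{\star})\rangle$ as $-\langle B^\top(p_k - p^{\star}), \IVk(u_k - u^{\star})\rangle_{\IVki}$ so that, combined with the other cross term, the total cross contribution becomes $\langle B^\top(p_k - p^{\star}), (\nabla f(u_k) - \nabla f(u^{\star})) - \IVk(u_k - u^{\star})\rangle_{\IVki}$. Using half of each quadratic I would then complete the nonnegative square $\tfrac{1}{2}\|\nabla f(u_k) - \nabla f(u^{\star}) + B^\top(p_k - p^{\star})\|^2_{\IVki}$, and bound the remaining cross term by Young's inequality with parameter $\theta$. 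The approximation error $\|(\nabla f(u_k) - \nabla f(u^{\star})) - \IVk(u_k - u^{\star})\|^2_{\IVki}$ is controlled by Corollary \ref{coro_fu12}; choosing $\theta = \sqrt{L_{e,\IVki}}$ and invoking Lemma \ref{lem: Equivalence between Le and muf} with the hypothesis $\mu_{f,\IVk} = 1$ gives $L_{e,\IVki} \leq 1 - 2\beta_k$ and hence $1 - \sqrt{L_{e,\IVki}} \geq \beta_k$, producing the partial lower bound $\tfrac{\beta_k}{2}\|\nabla f(u_k) - \nabla f(u^{\star})\|^2_{\IVki} + \tfrac{\beta_k}{2}\|p_k - p^{\star}\|^2_{S_k}$.

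Finally, I would incorporate the $\IQ$ contribution. Since $\mathcal{G}^Q_{k+1} = \tilde S_k - \IQkp$, the term $-\tfrac{1}{2}\|p_k - p^{\star}\|^2_{\gamma_k \mathcal{G}^Q_{k+1}}$ equals $\tfrac{\gamma_k}{2}\|p_k - p^{\star}\|^2_{\IQkp - \tilde S_k}$. Combining this with $\tfrac{\beta_k}{2}\|p_k - p^{\star}\|^2_{S_k}$, the choice $\gamma_k = \beta_k \mu_{S_k,\tilde S_k}$ together with the operator inequality $\mu_{S_k,\tilde S_k}\,\tilde S_k \preccurlyeq S_k$ yields $\beta_k S_k + \gamma_k(\IQkp - \tilde S_k) \succcurlyeq \gamma_k \IQkp$, delivering the target $\tfrac{\gamma_k}{2}\|p_k - p^{\star}\|^2_{\IQkp}$ and completing the proof.

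The main obstacle I anticipate is producing the \emph{updated} metric $\IQkp$ (rather than $\IQk$) on the right-hand side, a property essential for the contraction argument in Theorem \ref{thm: linear convergence for TPD}. This is handled cleanly by the specific design of the $\IQ$-update in Line \ref{alg: TPDv IQ}: since $\mathcal{G}^Q_{k+1}$ is evaluated at $\IQkp$, the $\tilde S_k$ piece emitted by the Young-step bound is exactly what can be absorbed into $\gamma_k \IQkp$ via the scaling $\gamma_k = \beta_k \mu_{S_k,\tilde S_k}$. Apart from this metric bookkeeping, the argument is a direct discrete analog of the continuous proof.
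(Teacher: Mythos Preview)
Your proposal is correct and follows essentially the same approach as the paper: the paper's proof simply invokes the argument of Theorem~\ref{thm: Strong Lyapunov property} verbatim at the discrete index $k$, noting that because $\mathcal G^Q_{k+1}=\tilde S_k-\IQkp$ is evaluated at $\IQkp$, the $\tilde S_k$ piece is absorbed via $\gamma_k=\beta_k\mu_{S_k,\tilde S_k}$ to produce the $\IQkp$-norm on the right. Your expansion of the two cross terms, the completed square, Young's inequality with $\theta=\sqrt{L_{e,\IVki}}$, and the use of $\mu_{f,\IVk}=1$ to simplify $\beta_k$ all match the paper's (terser) argument exactly.
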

\begin{proof}
    With the proof of Theorem \ref{thm: Strong Lyapunov property}, we have
    \begin{equation*}
    \begin{aligned}
    &-\inprd{  \partial_u \mathcal{E}_k , \IVki \mathcal{G}_k^u } -\inprd{  \partial_p \mathcal{E}_k , \IQk^{-1} \mathcal{G}_k^p } - \frac{1}{2}\|p-p^*\|^2_{\gamma_k\mathcal G^Q_{k+1}}\\
    \geq{} &  \frac{1}{2}\left (1-\sqrt{L_{e,\IVk}} \right)\|\nabla f(u_k) - \nabla f(u^{\star})\|_{\IV^{-1}}^2  + \frac{1}{2}\left (1-\sqrt{L_{e,\IVk}} \right)\| p-p^{\star}\|_{S_k}^2  + \frac{\gamma_k}{2}\| p-p^{\star}\|_{\IQkp-\tilde S_k}^2  \\
     \geq {} &  \frac{\beta_k}{2} \|\nabla f(u_k) - \nabla f(u^{\star})\|_{\IVk^{-1}}^2  + \frac{\gamma_k}{2}\| p_k-p^{\star}\|_{\IQkp}^2 .
    \end{aligned}
\end{equation*}
where  $ S_{k} = B\IVki B^{\top}$, $\beta_k$ and $\gamma_k$ are defined as specified. The formulae on $\beta_k$ is simplified using the fact $\mu_{f, \IVk} = 1$.

\end{proof}

Now we are in the position to present a proof of our main result.
\begin{proof}(Proof of Theorem \ref{thm: linear convergence for TPD}.)
We write the difference of the Lyapunov function \eqref{eq: discrete Lyapunov} as 
\begin{equation}\label{eq: E diff}
    \begin{aligned}
    \mathcal{E}_{k+1} - \mathcal{E}_k ={} &  \mathcal{E}_{k+1} - \mathcal{E}(u_k, p_k, {\IQ}_{k+1})+ \mathcal{E}(u_k, p_k, \IQkp)- \mathcal{E}_k\\
    \leq{} & \langle \partial_u  \mathcal{E}(u_k, p_k, \IQkp) , u_{k+1} - u_k \rangle + \frac{L_{f, \IVk}}{2}\|u_{k+1} - u_k\|_{\IVk}^2 \\
    &+ \langle \partial_p  \mathcal{E}(u_k, p_k, \IQkp) , p_{k+1} - p_k \rangle + \frac{1}{2} \|p_{k+1}-p_{k}\|^2_{{\IQ}_{k+1}} + \frac{1}{2} \|p_{k}-p^*\|^2_{{\IQ}_{k+1} - \IQk} .    
    \end{aligned}
\end{equation}

Using the strong Lyapunov property \eqref{eq: discrete Strong Lyapunov property} at index $k$, we have
\begin{equation}\label{eq:group1}
\begin{aligned}
    & \langle \partial_u  \mathcal{E}(u_k, p_k, \IQkp) , u_{k+1} - u_k \rangle + \langle \partial_p  \mathcal{E}(u_k, p_k, \IQkp) , p_{k+1} - p_k \rangle  + \frac{1}{2} \|p_{k}-p^*\|^2_{{\IQ}_{k+1} - \IQk} \\
   = &~ \alpha_k \inprd{  \partial_u \mathcal{E}(u_k,p_k, \IQk) ,\IVki \mathcal{G}_{k}^u }  +\alpha_k\inprd{  \partial_p \mathcal{E}(u_k,p_k, \IQk) , \IQki  \mathcal{G}^p_{k} } + \frac{\alpha_k\gamma_k}{2}\|p_k-p^{\star}\|^2_{\mathcal{G}^Q_{k+1}} \\
    \leq &~ -\frac{\alpha_k\beta_{k}}{2}\|\nabla f(u_k) - \nabla f(u^{\star})\|_{\IVk}^2  - \frac{\alpha_k\gamma_k}{2}  \| p_k-p^{\star}\|_{\IQkp}^2 
\end{aligned}
\end{equation}

We expand the square terms of difference in \eqref{eq: E diff} using Lipschitz continuity: 
\begin{align*}
\|u_{k+1} - u_k\|_{{\IV}_k}^2  ={}&\alpha_k^2 \| \nabla f(u_k) -  \nabla f(u^{\star})+ B^{\top}(p_k-p^{\star})\|_{\IVk^{-1}}^2 \\
	\leq {}&2\alpha_k^2 \|\nabla f(u_k) - \nabla f(u^{\star})\|_{\IVk^{-1}}^2  + 2\alpha_k^2 \|p_k - p^{\star}\|_{S_k}^2  , \\
 	\leq {}&2\alpha_k^2 \|\nabla f(u_k) - \nabla f(u^{\star})\|_{\IVk^{-1}}^2+ 2\alpha_k^2L_{S_k, \IQkp}\|p_k - p^{\star}\|_{\IQkp}^2, \\
 \|p_{k+1} - p_k\|_{\IQkp}^2  ={}&\alpha_k^2 \|S_k(p_k - p^{\star}) +  B[\IVk^{-1} (\nabla f(u_k) -   \nabla f(u^{\star}) )- (u_k-u^{\star})]\|_{\IQkpi}^2 \\
 \leq {}&\alpha_k^2L_{S_k, \IQkp}\|S_k(p_k - p^{\star}) +  B[\IVk^{-1} (\nabla f(u_k) -   \nabla f(u^{\star})) - (u_k-u^{\star})]\|_{S_k^{-1}}^2 \\
	\leq {}&2\alpha_k^2L_{S_k, \IQkp} \big ( \|\nabla f(u_k) - \nabla f(u^{\star}) - \IVk(u_k - u^{\star})\|_{\IVk^{-1}}^2+  \|p_k - p^{\star}\|_{S_k}^2 \big )\\
 \leq{}&2\alpha_k^2L_{S_k, \IQkp} \big (\|\nabla f(u_k) - \nabla f(u^{\star}) \|_{\IVk^{-1}}^2+  L_{S_k, \IQkp}\|p_k - p^{\star}\|_{\IQkp}^2\big ).
\end{align*}
The last inequality holds since we have the bound \eqref{eq: approx ineq} with $L_{e, \IVk} \leq 1$.

Substitute back to~\eqref{eq: E diff} and use \eqref{eq:group1}, 
\begin{equation*}
\begin{aligned}
\mathcal{E}_{k+1} - \mathcal E_k  \leq&~ -\frac{\alpha_k}{2}\left(\left[\beta_k - 2\alpha_k (L_{f, \IVk} + L_{S_k, \IQkp}) \right]\|\nabla f(u_k) - \nabla f(u^{\star}) \|_{\IVk^{-1}}^2 \right. \\
& \left. - \frac{\alpha_k}{2}\left [\gamma_k -2\alpha_kL_{S_k, \IQkp} (L_{f, \IVk} +L_{S_k, \IQkp})\right]  \|p_k-p^{\star}\|_{\IQkp}^2 \right)\\
\leq&~ -\alpha_k\left(\left[\beta_k - 2\alpha_k (L_{f, \IVk} + L_{S_k, \IQkp}) \right]D_f(u_{k}, u^{\star}) \right. \\
& \left. - \frac{\alpha_k}{2(1+ \alpha_k\gamma_{k})}\left [\gamma_k-2\alpha_kL_{S_k, \IQkp}(L_{f, \IVk} +L_{S_k, \IQkp})\right] \|p_k-p^{\star}\|_{\IQk}^2 \right) .
\end{aligned}
\end{equation*}
The last inequality holds since according to the update 
\begin{equation*}
    \IQkp =\frac{1}{1+ \alpha_k\gamma_{k}}(\IQk + \alpha_k \gamma_{k}\tilde{S}_{k} ) \geq \frac{1}{1+ \alpha_k\gamma_{k}} \IQk.
\end{equation*}
Therefore, we can conclude 
$$ \mathcal{E}_{k+1} \leq (1-\alpha_k\eta_k(\alpha_k )) \mathcal E_k$$
with 
$$
\begin{aligned}
    \eta_k(\alpha_k) &=   \min   \left \{ \left [\beta_k - 2\alpha_k (L_{f,\IVk} + L_{S_k, \IQkp}) \right] \right. , \\
    &\quad \quad \quad \left. \frac{1}{1+ \alpha_k\gamma_{k}}\left [\gamma_k- 2\alpha_k L_{S_k, \IQkp}  (L_{f,\IVk} + L_{S_k, \IQkp} ) \right]\right\}.
\end{aligned}
$$
 
The upper bound of $\alpha_k$ is given by requiring $\eta_k(\alpha_k) > 0$.  
Substitude the step size 
$$\alpha_k  = \frac{\beta_k}{4(L_{f,\IVk} + L_{S_k, \IQkp})} \min 
        \left \{\frac{ \mu_{S_k, \tilde S_k }}{L_{S_k, \IQkp}} , 1\right \},$$
we have the decay rate.
\end{proof}
\begin{remark}\rm
    For variants of TPDv algorithms such as the implicit-explicit scheme \eqref{eq:TPDv-imex}, the global linear convergence rate can be derived using the strong Lyapunov property at index $k+1$ and write the scheme as a correction of the implicit Euler scheme. We refer the reader to this strategy in \cite[Theorem 4.3] {chen2023transformed}.
\end{remark}

\section{Conclusions and Future Work}\label{sec:conclusions}

In this work, we introduce an innovative Transformed Primal-Dual with variable-metric/preconditioner (TPDv) algorithm tailored to address constrained optimization problems commonly encountered in nonlinear partial differential equations. The effectiveness of the TPDv algorithm stems from the adaptability of time-evolving preconditioning operators. We establish the dynamical system at the continuous level for the corresponding algorithm and demonstrate its exponential stability. Utilizing Lyapunov analysis, we illustrate the global linear convergence rates of the algorithm under mild assumptions and underscore the impact of the choice of preconditioners on the convergence rate. Through numerical experiments on challenging nonlinear PDEs, such as the Darcy-Forchheimer model and a nonlinear electromagnetic problem, we highlight the algorithm's superiority in terms of iteration numbers and computational efficiency compared to existing methods.

As we showcase the effectiveness of the TPDv algorithms, particularly with IMEX schemes, it becomes intriguing to extend their applicability to nonsmooth problems involving proximal operators. The analytical tools employed herein would provide theoretical guarantees for global convergence rates. In the context of large-scale computations, the TPDv algorithms, in combination with parallelizable preconditioners, could serve as efficient solvers for domain decomposition problems. 

\section*{Acknowledgments}
We would like to express our gratitude to Professor Jun Zou at the Chinese University of Hong Kong for his valuable suggestions on the presentation of the algorithms. His insights have significantly enhanced the adaptability of our theoretical framework.

\bibliographystyle{siam}
\bibliography{RuchiBib.bib}

\end{document}